\newcommand{\excise}[1]{}
\DeclareMathOperator{\Hom}{Hom}
\newtheorem{thm}{Theorem}[section]
\newtheorem{lemma}[thm]{Lemma}
\newtheorem{cor}[thm]{Corollary}
\newtheorem{prop}[thm]{Proposition}
\newtheorem{conj}[thm]{Conjecture}
\newtheorem{question}[thm]{Question}
\newtheorem{prob}[thm]{Problem}
\theoremstyle{definition}
\newtheorem{example}[thm]{Example}
\newtheorem{remark}[thm]{Remark}
\newtheorem{defn}[thm]{Definition}
\numberwithin{equation}{section}
\newcommand{\ring}[1]{\ensuremath{\mathbb{#1}}}
\renewcommand\>{\rangle}
\newcommand\NN{\ring{N}}
\newcommand\QQ{\ring{Q}}
\newcommand\ZZ{\ring{Z}}
\newcommand\iso{\cong}
\begin{document}

\mbox{}
\title{Factorization Properties of Leamer Monoids\qquad}

\author[J. Haarmann]{Jason Haarmann}
\address{Eastern Illinois University \\ Charleston, IL 61920}
\email{jhaarmann@eiu.edu}

\author[A. Kalauli]{Ashlee Kalauli}
\address{University of Hawai`i at Hilo \\ Hilo, HI 96720}
\email{h8askala@gmail.com}

\author[A. Moran]{Aleesha Moran}
\address{McKendree University \\ Lebanon, IL 62254}
\email{aleeshavmoran@gmail.com}

\author[C. O'Neill]{Christopher O'Neill}
\address{Mathematics Department\\Duke University\\Durham, NC 27708}
\email{musicman@math.duke.edu}

\author[R. Pelayo]{Roberto Pelayo}
\address{Mathematics Department\\University of Hawai`i at Hilo\\Hilo, HI 96720}
\email{robertop@hawaii.edu}

\date{}

\begin{abstract}
\hspace{-2.05032pt}
The Huneke-Wiegand conjecture has prompted much recent research in Commutative Algebra.  In studying this conjecture for certain classes of rings, Garc\'ia-S\'anchez and Leamer construct a monoid $S_\Gamma^s$ whose elements correspond to arithmetic sequences in a numerical monoid $\Gamma$ of step size $s$.  These monoids, which we call Leamer monoids, possess a very interesting factorization theory that is significantly different from the numerical monoids from which they are derived.  In this paper, we offer much of the foundational theory of Leamer monoids, including an analysis of their atomic structure, and investigate certain factorization invariants.  Furthermore, when $S_\Gamma^s$ is an arithmetical Leamer monoid, we give an exact description of its atoms and use this to provide explicit formulae for its Delta set and catenary degree.
\end{abstract}
\maketitle

\section{Introduction}\label{s:intro}

In~\cite{tensor}, C.~Huneke and R. Wiegand propose the following conjecture regarding torsion submodules of tensor products.

\begin{conj}[Huneke-Wiegand]\label{c:HW}
Let $R$ be a one-dimensional Gorenstein domain.  Let $M \neq 0$ be a finitely generated $R$-module, which is not projective.  Then the torsion submodule of $M~\!\!\otimes_R~\!\!\Hom_R(M,R)$ is non-trivial.  
\end{conj}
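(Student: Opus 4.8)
The plan is to reduce to a complete local situation, recast the contrapositive as the implication ``$M\otimes_R M^*$ torsion-free $\implies M$ free,'' and then attack it with the depth and $\Tor$-rigidity machinery that resolves the hypersurface case. Throughout write $M^*=\Hom_R(M,R)$.

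First I would carry out the standard reductions. Formation of the torsion submodule commutes with localization, and being a one-dimensional Gorenstein domain is preserved under completion, which is faithfully flat; so it suffices to treat the case that $R$ is a complete local one-dimensional Gorenstein domain. Writing $0\too T\too M\too\ol M\too 0$ with $T$ the torsion submodule and $\ol M$ torsion-free, one has $M^*\cong\ol M^*$ since $\Hom_R(T,R)=0$ over a domain, so the dual sees only the torsion-free quotient and the substance of the conjecture lies in the torsion-free case, to which I reduce. Over a one-dimensional Gorenstein local ring a finitely generated torsion-free module is maximal Cohen--Macaulay and hence reflexive, with $M^*=\Hom_R(M,\omega_R)$ under $\omega_R\cong R$; I will use this self-duality freely.

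Next I would isolate what torsion-freeness buys. Tensoring is governed by the evaluation map $h\colon M^*\otimes_R M\too\Hom_R(M,M)$, $\phi\otimes m\mapsto(x\mapsto\phi(x)m)$, which is bijective exactly when $M$ is projective and which lands in the torsion-free module $\Hom_R(M,M)$. Inverting a nonzero element of $R$ makes $M$ free, so $h$ is an isomorphism at the generic point; hence $\ker h$ is torsion, and torsion-freeness of $M^*\otimes_R M$ forces $\ker h=0$, that is, $h$ is injective. The remaining content is the surjectivity of $h$, governed by the higher homological invariants $\Tor_i^R(M,M^*)$ for $i\geq 1$, all of which are again torsion by the same generic argument. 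Thus the problem is reduced to showing that reflexivity of $M$ together with torsion-freeness (hence maximal Cohen--Macaulayness) of $M^*\otimes_R M$ forces these $\Tor$ modules to vanish, whence $M$ is free.

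The final step, converting such $\Tor$-vanishing into projectivity, is the crux and the main obstacle. For hypersurfaces this is exactly the point at which Huneke--Wiegand succeed: rigidity of $\Tor$ propagates the vanishing of $\Tor_1^R(M,M^*)$ to all higher $\Tor$, and the depth formula then forces $M$ free. Over a general one-dimensional Gorenstein domain no such rigidity is known, so I expect to have to manufacture a substitute: either a rigidity statement tailored to the self-dual pair $(M,M^*)$ via $\omega_R\cong R$, or an analysis in the stable category of maximal Cohen--Macaulay modules using Auslander--Reiten theory, or a direct length-and-multiplicity comparison of $M\otimes_R M^*$ with $\Hom_R(M,M)$ that exhibits non-freeness as genuine torsion. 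Closing this gap is precisely the open heart of the conjecture, and the Leamer-monoid model developed below is one route to settling it in the numerical-semigroup case.
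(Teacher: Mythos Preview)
The statement you are attempting is labeled \emph{Conjecture} in the paper, not a theorem: the paper offers no proof and treats the Huneke--Wiegand conjecture as open throughout, using it only as motivation for studying Leamer monoids (see Proposition~\ref{p:HW} and the surrounding discussion). There is therefore no ``paper's own proof'' to compare your proposal against.

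Your write-up is an accurate survey of the standard reductions---localize and complete, pass to the torsion-free quotient, interpret torsion-freeness of $M\otimes_R M^*$ via the evaluation map $M^*\otimes_R M\to\Hom_R(M,M)$, and reduce to a $\Tor$-vanishing/rigidity question---and you correctly identify that the hypersurface case goes through by Huneke--Wiegand's rigidity plus the depth formula. But you also explicitly concede that for a general one-dimensional Gorenstein domain no rigidity substitute is available and that ``closing this gap is precisely the open heart of the conjecture.'' That is an honest assessment, but it means what you have written is a roadmap, not a proof: the decisive step (forcing $\Tor_i^R(M,M^*)=0$ for $i\ge 1$ from torsion-freeness of the tensor product, and then deducing freeness) is left entirely unresolved. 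Until that gap is filled, the proposal does not establish the conjecture.
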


Recently, this still-open conjecture has spurred much subsequent work (see~\cite{auslander, vanishing, intersection, goto, leamer}).  Of particular interest is~\cite{intersection}, where P. Garc\'ia-S\'anchez and M. Leamer study this conjecture in special cases related to numerical monoid algebras.  Given a numerical monoid $\Gamma$ and $s \in \NN \setminus \Gamma$, they construct a monoid, which we denote as $S_\Gamma^s$, whose elements correspond to arithmetic sequences in $\Gamma$ of step size $s$ and whose monoid operation is set-wise addition.  These monoids, which we refer to as \emph{Leamer monoids} in honor of \cite{leamer}, reduce a special case of the Huneke-Wiegand conjecture to finding irreducible elements of a certain type.

\begin{prop}[{\cite[Corollary 7]{intersection}}]\label{p:HW}
Let $\Gamma$ be a numerical monoid and $\mathbb K$ be a field.  The monoid algebra $\mathbb K[\Gamma]$ satisfies the Huneke-Wiegand conjecture for monomial ideals generated by two elements if and only if for each $s \in \NN \setminus \Gamma$, there exists an irreducible arithmetic sequence of the form $\{x, x+s, x+2s\}$ in $\Gamma$.  
\end{prop}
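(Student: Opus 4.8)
The plan is to follow Garc\'ia-S\'anchez and Leamer and reduce the module-theoretic assertion to a purely combinatorial one about $\Gamma$. Write $R = \kk[\Gamma]$ as a subring of $\kk[t]$, with fraction field $K = \kk(t)$. Every monomial ideal of $R$ generated by two elements has the form $(t^a, t^b)$ with $a, b \in \Gamma$, $a \le b$, and multiplication by $t^{-a} \in K$ identifies it, as an $R$-module, with the fractional ideal $I_s := R + t^sR = (1, t^s)$, where $s = b-a \in \NN$. If $s \in \Gamma$ then $t^s \in R$, so $I_s = R$ is free, hence projective, and the Huneke--Wiegand conclusion is vacuous; conversely, for any $s \in \NN \setminus \Gamma$, picking $c \in \Gamma$ with $c+s \in \Gamma$ exhibits $I_s \iso (t^c, t^{c+s})$ as a non-free two-generated monomial ideal. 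So it suffices to show: for every $s \in \NN \setminus \Gamma$, the torsion submodule of $I_s \otimes_R \Hom_R(I_s, R)$ is nonzero. For a nonzero fractional ideal $I$ of the domain $R$ one has $\Hom_R(I, R) \iso (R:I) := \{x \in K : xI \subseteq R\}$, and the natural multiplication map $\mu_I \colon I \otimes_R (R:I) \to I\cdot(R:I)$ surjects onto the nonzero ideal $I\cdot(R:I)$ of $R$; applying $- \otimes_R K$ turns $\mu_I$ into an isomorphism of one-dimensional $K$-vector spaces, so $\ker\mu_I$ is a torsion module, and being the kernel of a map into a torsion-free module it \emph{is} the torsion submodule of $I \otimes_R \Hom_R(I, R)$. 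Thus the goal becomes: $\mu_{I_s}$ is not injective for every $s \in \NN \setminus \Gamma$.

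The second step is to compute $\mu_{I_s}$ explicitly. One checks that $(R:I_s) = R \cap t^{-s}R$ has $\kk$-basis $\{t^\beta : \beta \in A_s\}$, where $A_s := \{\beta \in \Gamma : \beta + s \in \Gamma\}$ (a $\Gamma$-ideal not containing $0$); write $J := (R:I_s)$. The surjection $R^2 \onto I_s$, $(a,b) \mapsto a + bt^s$, has kernel $\{(-bt^s, b) : b \in J\} \iso J$, giving $0 \to J \to R^2 \to I_s \to 0$. Applying $- \otimes_R J$ presents $I_s \otimes_R J$ as $J^{\oplus 2}$ modulo the image of $J \otimes_R J \to J^{\oplus 2}$, $\sum_i b_i \otimes c_i \mapsto \bigl(-t^s\sum_i b_ic_i,\ \sum_i b_ic_i\bigr)$, which is $\{(-t^s\eta, \eta) : \eta \in JJ\}$ with $JJ$ the product ideal; under this presentation $\mu_{I_s}$ is induced by $J^{\oplus 2} \to I_sJ$, $(a,b) \mapsto a + bt^s$, whose kernel is $\{(-bt^s, b) : b \in J,\ bt^{2s} \in R\}$. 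Because $x \mapsto (-t^sx, x)$ is an $R$-linear bijection onto each of these two submodules of $J^{\oplus 2}$, $\mu_{I_s}$ is injective if and only if $JJ = \{b \in J : bt^{2s} \in R\}$ inside $R$.

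The third step is the combinatorial translation. Both sides of that equation are spanned by monomials, so it is an equality of subsets of $\Gamma$: the support of $JJ$ is $A_s + A_s$, while the support of $\{b \in J : bt^{2s} \in R\}$ is $B_s := \{\beta \in \Gamma : \beta + s \in \Gamma \text{ and } \beta + 2s \in \Gamma\}$. One always has $A_s + A_s \subseteq B_s$, so $\mu_{I_s}$ fails to be injective exactly when $B_s \setminus (A_s + A_s) \ne \nothing$. For $x$ in that difference, $x \in B_s$ says $\{x, x+s, x+2s\} \subseteq \Gamma$, i.e.\ $\{x, x+s, x+2s\}$ is an element of $S_\Gamma^s$; and $x \notin A_s + A_s$ says that $x$ cannot be written as $\gamma + \delta$ with $\{\gamma, \gamma+s\}, \{\delta, \delta+s\} \subseteq \Gamma$, i.e.\ that $\{x, x+s, x+2s\}$ is not the set-wise sum of two shorter arithmetic sequences of step size $s$ from $\Gamma$ --- which, since every nontrivial factorization of a three-term sequence in $S_\Gamma^s$ is of this shape, is precisely irreducibility. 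Assembling the reductions yields the claimed equivalence: $\kk[\Gamma]$ satisfies the Huneke--Wiegand conjecture for monomial ideals generated by two elements if and only if for every $s \in \NN \setminus \Gamma$ there is an irreducible arithmetic sequence $\{x, x+s, x+2s\}$ in $\Gamma$.

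I expect the main obstacle to be the middle step: setting up the presentation of $I_s \otimes_R J$ correctly and pinning down $\ker\mu_{I_s}$ and the relation submodule as the explicit ``monomial'' modules above, then checking that their comparison really does collapse to the inclusion $A_s + A_s \subseteq B_s$ --- in particular that every module in sight is spanned by monomials, so that vector-space equality coincides with support equality. The rest --- the reduction to the ideals $I_s$, the identification of $\ker\mu_{I_s}$ with the torsion submodule, and the final unwinding of ``$x \in B_s \setminus (A_s + A_s)$'' in terms of the operation on $S_\Gamma^s$ --- is routine once that computation is in place.
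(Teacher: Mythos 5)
The paper offers no proof of this proposition at all --- it is quoted from \cite[Corollary~7]{intersection} --- so your argument can only be compared with that source, and your reduction is essentially the one carried out there: identify $(t^a,t^b)$ with the fractional ideal $I_s=R+t^sR$, identify the torsion submodule of $I_s\otimes_R\Hom_R(I_s,R)$ with $\ker\mu_{I_s}$, present $I_s\otimes_R(R:I_s)$ via $0\to J\to R^2\to I_s\to 0$, and use the fact that every module in sight is spanned by monomials to collapse injectivity of $\mu_{I_s}$ to the support equality $A_s+A_s=B_s$, whose failure is exactly the existence of an atom $(x,2)$ of $S_\Gamma^s$. Those steps are all correct as written: $(R:I_s)=R\cap t^{-s}R$ has monomial basis indexed by $A_s$, the kernel of $J^{\oplus 2}\to I_sJ$ and the relation submodule are both graphs over monomial subspaces, and the comparison does reduce to $B_s\setminus(A_s+A_s)\neq\nothing$.

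The one step you need to shore up is in the direction ``HW for two-generated monomial ideals $\implies$ existence of irreducibles'': to invoke the hypothesis for $(t^c,t^{c+s})$ you must know this ideal is \emph{not projective}, whereas you only observe it is not free. Over $\kk[\Gamma]$ these are not interchangeable: the Picard group of a numerical semigroup ring is typically nontrivial (e.g.\ $\mathrm{Pic}(\kk[t^2,t^3])\iso\kk$), so non-free rank-one projectives do exist, and ``non-free'' alone does not license applying the conjecture's hypothesis. If $I_s$ could be projective (equivalently, invertible), then $\mu_{I_s}$ would be injective, $B_s=A_s+A_s$ would hold, and the equivalence would break for that $s$. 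Fortunately the fix is one line inside your own computation: $I_s\cdot(R:I_s)$ is a monomial ideal whose support lies in $(A_s+\Gamma)\cup(A_s+s+\Gamma)$, and since $0\notin A_s$ (because $s\notin\Gamma$) this support misses $0$; hence $I_s(R:I_s)\neq R$, so $I_s$ is not invertible, and a finitely generated rank-one torsion-free module over a Noetherian domain is projective if and only if it is invertible. (Alternatively: a graded ideal that is projective is free by graded Nakayama, so non-principal monomial ideals are never projective.) With that sentence added, your proof is complete.
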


Thus, understanding the atomic structure of Leamer monoids could provide progress towards proving the Huneke-Wiegand conjecture.  In investigating these algebraic objects, we find that, beyond their importance in Commutative Algebra, Leamer monoids possess a very interesting factorization theory with numerous notable properties.   We investigate several factorization invariants of $S_\Gamma^s$, including a computation of elasticity and a bound on the Delta set. In special cases, we provide exact formulae for length sets, Delta sets, and catenary degrees.  

Although elements in Leamer monoids are arithmetic sequences in $\Gamma$ of a fixed step size, the atomic properties of $S_\Gamma^s$ differ greatly from the numerical monoid $\Gamma$ from which they are derived. In particular, many of the computable invariants for numerical monoids (e.g.~Delta sets, see~\cite{delta}) are harder to establish in Leamer monoids.  Thus, investigating special cases (i.e.~arithmetical Leamer monoids) becomes both necessary and fruitful.

As Leamer monoids are a novel construction, this paper provides much of the foundational theory for studying these intriguing algebraic objects.  In Section~\ref{s:defn}, we provide several definitions and examples of Leamer monoids and investigate their atomic structure, showing that although they possess infinitely many irreducible elements, these atoms are still fairly constrained.  In Section~\ref{s:elasticitydelta}, we show that Leamer monoids have infinite elasticity and give an explicit bound for the maximal element of their Delta sets.  In Section~\ref{s:arithmetical}, we investigate the special case of arithmetical Leamer monoids, that is, Leamer monoids $S_\Gamma^s$ where $\Gamma$ is generated by an arithmetic sequence with step size~$s$.  For these Leamer monoids, we provide an exact description of the irreducible and reducible elements and use this to derive formulae for their Delta sets and catenary degrees.  In Section~\ref{s:future}, we provide several open problems related to factorization invariants of Leamer monoids and the study of $S_\Gamma^s$ when $\Gamma$ has certain types of generating sets.

\section{Definitions and Atomic Structure of Leamer Monoids}\label{s:defn}%

In what follows, we let $\Gamma = \langle m_1, \ldots, m_k\rangle$ be a numerical monoid (i.e., co-finite additive submonoid of $\mathbb N$) where $\{m_1, \ldots, m_k\}$ constitutes its minimal generating set and $m_1 < m_2 < \ldots < m_k$.

\begin{defn}\label{d:leamer}
Given a numerical monoid $\Gamma$, $s \in \mathbb N\setminus \Gamma$, define 
$$S_\Gamma^s = \{(0,0)\} \cup \left\{(x,n) :  \{x,x+s, x+2s, \ldots, x+ns\} \subset \Gamma \right\} \subset \mathbb N^2.$$
That is, $S_\Gamma^s$ is the collection of arithmetic sequences of step size $s$ contained in $\Gamma$.  
\end{defn}

Since $S_\Gamma^s$ is a subset of $\NN^2$, it inherits its natural component-wise additive structure.  It is clear that $S_\Gamma^s$ is closed under this operation, so $S_\Gamma^s$ is an additive submonoid of $\NN^2$.  Throughout this paper, we denote by $\mathcal A (S_\Gamma^s)$ the set of irreducible elements (or atoms) of $S_\Gamma^s$.  Building upon functions in the \texttt{GAP} package \texttt{numericalsgps} \cite{numericalsgps}, we are able to plot the elements of $S_\Gamma^s$ and indicate (by large red dots) the irreducible elements; see Figures~\ref{f:leamergraph}~and~\ref{f:anotherleamergraph}.  

\begin{example}\label{e:7_10_s=3}
For $\Gamma = \langle 7,10 \rangle$ and $s = 3$, the graph of the Leamer monoid $S_{\langle 7,10\rangle}^3$ is given below in Figure~\ref{f:leamergraph}.  In this example, $\Gamma$ is generated by an arithmetic sequence with step size $3$, which is equal to $s$.  The Frobenius number of $\Gamma$ (53 in this case) can be easily read off the graph as the rightmost column absent of any dots.
\end{example}

\begin{figure}[ht]
  \centering
    \includegraphics[width=4in]{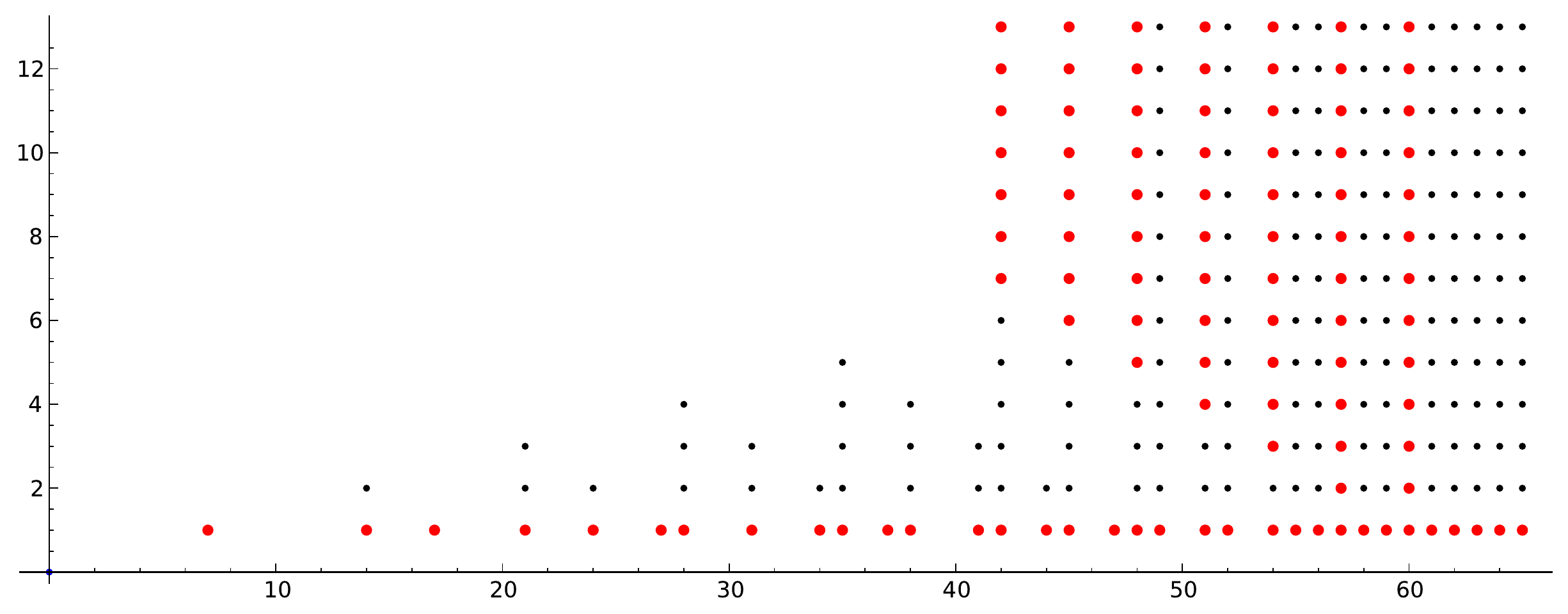}
  \caption{The Leamer Monoid described in Example~\ref{e:7_10_s=3}.}
\label{f:leamergraph}
\end{figure}

As in Example~\ref{e:7_10_s=3}, Leamer monoids $S_\Gamma^s$ where $\Gamma$ is of the form $\Gamma = \<m, m + s, \ldots, m + ks\>$ for some $m, k \in \NN$ have the best understood set of irreducibles (see Section~\ref{s:arithmetical} for a detailed discussion).  When $\Gamma$ is not generated by an arithmetic sequence (or even when it is, but $s$ is not equal to the step size between the generators), the Leamer monoids have less predictable structure.

\begin{example}\label{e:13_17_22_40_s=4}
For $\Gamma = \langle 13,17,22,40 \rangle$ and $s = 4$, the Leamer monoid $S_{\Gamma}^4$ is given in Figure~\ref{f:anotherleamergraph}.  Unlike Example~\ref{e:7_10_s=3}, this monoid possesses numerous irreducibles with $n \geq 2$ before the left-most infinite column.  These atoms make the analysis of such Leamer monoids much more difficult.  
\end{example}

\begin{figure}[ht]
  \centering
    \includegraphics[width=4in]{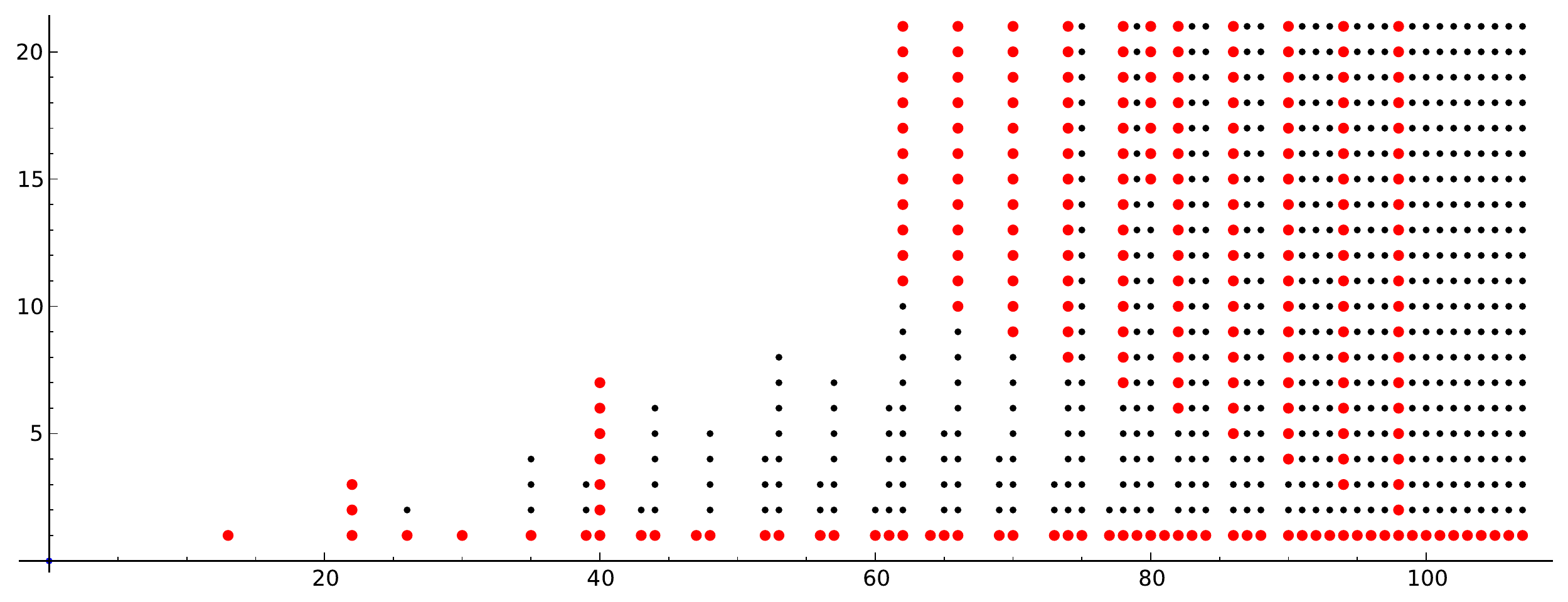}
  \caption{The Leamer Monoids described in Example~\ref{e:13_17_22_40_s=4}.}
\label{f:anotherleamergraph}
\end{figure}

Motivated by these graphical representations, we present the following definitions that provide insight into the structure of Leamer monoids.

\begin{defn}\label{d:column}
For a Leamer monoid $S_{\Gamma}^s$ and $x \in \Gamma$, the \emph{column at $x$} is the set 
$$\{(x,n) \in S_{\Gamma}^s:n \geq 1\} .$$
If this set is empty, we say that no column exists at $x$.  If a column exists at $x$, the column at $x$ is said to be \emph{finite} (resp., infinite) if the column has finite (resp., infinite) cardinality.  The \emph{height} of the finite column at $x$ is 
$$\max\{n:(x,n) \in S_{\Gamma}^s\}.$$
\end{defn}

\begin{remark}
Definition~\ref{d:leamer} makes sense for any $s \in \ZZ$.  However, if $s \in \Gamma$, then the atomic structure of $S_\Gamma^s$ is inherited directly from $\Gamma$.  
Furthermore, $S_\Gamma^s \iso S_\Gamma^{-s}$ as monoids, and the latter corresponds to the (equivalent) embedding of $S_\Gamma^s$ in $\NN^2$ where the point $(x,n)$ represents the arithmetic sequence $\{x - ns, \ldots, x - s, x\}$ rather than $\{x, x + s, \ldots, x + ns\}$.  Many of the statements in Lemma~\ref{l:atoms} would be different under this new embedding; for instance, every column of $S_\Gamma^{-s}$ is finite.  
\end{remark}

The first non-identity element in a Leamer monoid and the first infinite column play important computational roles in understanding the monoid structure of $S_{\Gamma}^s$.  We provide their notation below.

\begin{defn}\label{d:x0}
Given a Leamer monoid $S_{\Gamma}^s$, we use $x_0(S_\Gamma^s)$ to denote the smallest $x$ such that $(x,1) \in S_{\Gamma}^s$.  We denote by $x_f(S_\Gamma^s)$ the first infinite column of $S_\Gamma^s$, that is, the smallest $x$ such that $(x,n) \in S_{\Gamma}^s$ for all $n \geq 1$.  When $S_\Gamma^s$ is clear from the context, we refer to these as simply $x_0$ and $x_f$.  
\end{defn}

If $\mathcal F (\Gamma)$ is the Frobenius number of $\Gamma$, then clearly there exists an infinite column at $\mathcal F(\Gamma) + 1$.  In particular, both $x_0$ and $x_f$ exist; see Lemma~\ref{l:atoms} below.  

Since the connection between Leamer monoids and the Huneke-Wiegand conjecture centers on the existence of irreducible elements of length $2$, understanding the factorization structure of $S_{\Gamma}^s$ is of significant importance.   The first important observation is that Leamer monoids are atomic.  We leave the proof of this to the reader.  

\begin{thm}\label{t:atomic}
Let $\Gamma$ be a numerical monoid with $s \in \mathbb N \setminus \Gamma$. The Leamer monoid $S_\Gamma^s$ is atomic.  That is, any $(x,n) \in S_{\Gamma}^s$ can be written as a finite sum of irreducible elements.
\end{thm}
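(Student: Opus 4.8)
The plan is to prove atomicity directly from the well-ordering of $\NN^2$ under the relevant partial order, mimicking the standard argument that any submonoid of $\NN^k$ satisfying the ascending chain condition on the divisibility preorder is atomic. First I would observe that the only unit in $S_\Gamma^s$ is the identity $(0,0)$: if $(x,n) + (y,m) = (0,0)$ in $\NN^2$, then $x=y=0$ and $n=m=0$. Hence every nonzero element is either an atom or factors properly as a sum of two nonzero elements. Next I would set up the induction: for $(x,n) \in S_\Gamma^s$ nonzero, argue by strong induction on the quantity $x + n$ (a nonnegative integer, strictly positive since $(x,n)\neq(0,0)$). If $(x,n)$ is irreducible, we are done. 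Otherwise write $(x,n) = (y,m) + (y',m')$ with both summands nonzero elements of $S_\Gamma^s$; componentwise this gives $y+y' = x$ and $m+m' = n$ with $y,y',m,m' \geq 0$, so $y + m < x + n$ and $y' + m' < x + n$, and each summand lies in $S_\Gamma^s$ (that membership is exactly the hypothesis that the factorization is in the monoid). By the induction hypothesis each summand is a finite sum of atoms, so $(x,n)$ is too.

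The one subtlety worth spelling out — and the only place a reader could object — is why a proper factorization must strictly decrease $x+n$ in \emph{both} summands rather than leaving one of them equal to $(x,n)$. This is handled by the unit observation above: if, say, $(y',m') = (0,0)$ then $(y,m) = (x,n)$ is not a proper factorization, so in a genuine proper factorization both summands are nonzero, hence both have strictly smaller $x+n$ value because the coordinates are nonnegative and sum to $x$ and $n$ respectively. A mild degenerate case to note: $x$ could be $0$ only for the element $(0,0)$, since $(0,n)$ with $n\geq 1$ would require $0 \in \Gamma$ and $0 + s = s \in \Gamma$, contradicting $s \notin \Gamma$; so every nonzero element actually has $x \geq 1$, which also makes the base case $x + n = 1$ vacuous or immediate.

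I do not expect any real obstacle here — this is precisely the kind of ``leave to the reader'' statement the authors flagged. The main (entirely routine) point to get right is the bookkeeping that the induction parameter $x+n$ strictly drops, which rests on the triviality of the unit group. If one wanted a slicker phrasing, one could instead invoke the general fact that $S_\Gamma^s$, being a finitely-generated-free-ambient submonoid of $\NN^2$ with trivial unit group, is a reduced affine monoid and hence atomic; but the self-contained induction on $x+n$ is shorter to write and requires no external machinery, so that is the route I would take.
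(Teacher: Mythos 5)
Your proposal is correct, and since the paper explicitly leaves this proof to the reader, your strong induction on $x+n$ (using that $(0,0)$ is the only unit, so any proper decomposition strictly decreases $x+n$ in both summands) is exactly the routine argument the authors intended to omit. Nothing further is needed.
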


The following observations are motivated by the graphs in Figures~\ref{f:leamergraph} and~\ref{f:anotherleamergraph}. 
\begin{lemma}\label{l:atoms}
Let $S_{\Gamma}^s$ be a Leamer monoid.  

\begin{enumerate}[(a)]

\item 
If $(x,1) \in S_{\Gamma}^s$, then $(x,1) \in \mathcal A(S_\Gamma^s).$

\item 
For $n >\!\!> 0$, $(x_f,n) \in \mathcal A(S_\Gamma^s)$.

\item 
If $(x,n) \in S_\Gamma^s$, then $(x,n') \in S_\Gamma^s$ for all $1 \leq n' \leq n$.

\item 
If $(x,n-1) \in \mathcal A(S_\Gamma^s)$ and $(x,n) \in S_\Gamma^s$ for $n > 2$, then $(x,n) \in \mathcal A(S_\Gamma^s).$ 

\item 
If $(x,n-1) \in \mathcal A(S_\Gamma^s)$ and $(x-s,n) \in S_\Gamma^s$ for $n > 2$, then $(x-s,n) \in \mathcal A(S_\Gamma^s).$ 

\item 
The column at every $x > \mathcal F(\Gamma)$ is infinite.

\item 
For all $x > \mathcal F(\Gamma) + x_0$ and $n \geq 2$, $(x,n)$ is a reducible element in $S_\Gamma^s$. 

\end{enumerate}
\end{lemma}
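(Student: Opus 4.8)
The plan is to reduce every part to one elementary reformulation of the definition: for $n\ge 1$, a pair $(x,n)$ lies in $S_\Gamma^s$ exactly when $x+is\in\Gamma$ for all $i$ with $0\le i\le n$; the monoid operation is coordinatewise addition; and the only element of $S_\Gamma^s$ with second coordinate $0$ is the identity $(0,0)$. Part~(c) is then immediate, since the condition defining $(x,n)$ contains the one defining $(x,n')$ whenever $1\le n'\le n$. Part~(a) follows because in any factorization $(x,1)=(a,b)+(c,d)$ one has $b+d=1$, so some summand has second coordinate $0$ and is the identity; hence $(x,1)$ admits no nontrivial factorization. I will also record, for use in the remaining parts, that a non-identity element $(a,b)$ has $a\ge 1$ --- indeed $a\in\Gamma$, and $a=0$ would give $s=a+s\in\Gamma$, contradicting $s\in\NN\setminus\Gamma$ --- so in any factorization $(x,n)=(a,b)+(c,d)$ into two non-identity elements, both first coordinates lie strictly between $0$ and $x$, and both second coordinates are at least $1$.

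For (d) and (e) the idea is to propagate irreducibility one step up a column, respectively one step up an $s$-shifted diagonal. Suppose $(x,n)$ --- respectively $(x-s,n)$ --- were reducible, say equal to $(a,b)+(c,d)$ with $b,d\ge 1$ and $b+d=n$. Since $n>2$, some summand, say the second after relabeling, has $d\ge 2$. For (d), part~(c) gives $(c,d-1)\in S_\Gamma^s$, so $(x,n-1)=(a,b)+(c,d-1)$ is a nontrivial factorization of $(x,n-1)$, contradicting $(x,n-1)\in\mathcal A(S_\Gamma^s)$. For (e), the key observation is that if $c+is\in\Gamma$ for $0\le i\le d$ then $(c+s)+is\in\Gamma$ for $0\le i\le d-1$, so $(c+s,d-1)\in S_\Gamma^s$; then $(x,n-1)=(a,b)+(c+s,d-1)$ --- note that $a+(c+s)=(a+c)+s=x$ --- is again a nontrivial factorization of $(x,n-1)$, the desired contradiction. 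If instead it is the first summand whose second coordinate is at least $2$, one truncates, respectively shifts, that summand symmetrically.

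Part~(f) uses $s\ge 1$ (since $0\in\Gamma$ forces $s\ne 0$): if $x>\mathcal F(\Gamma)$ then $x+is>\mathcal F(\Gamma)$ for every $i\ge 0$, so all lie in $\Gamma$ and $(x,n)\in S_\Gamma^s$ for all $n\ge 1$; in particular the column at $\mathcal F(\Gamma)+1$ witnesses that $x_0$ and $x_f$ exist, with $x_f\le\mathcal F(\Gamma)+1$. Part~(g) is then a direct construction: for $x>\mathcal F(\Gamma)+x_0$ and $n\ge 2$, write $(x,n)=(x_0,1)+(x-x_0,n-1)$; the first summand lies in $S_\Gamma^s$ by definition of $x_0$, the second because $x-x_0>\mathcal F(\Gamma)$ forces $(x-x_0)+is\in\Gamma$ for all $i\ge 0$, and both summands are non-identity, so $(x,n)$ is reducible. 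For (b), minimality of $x_f$ shows that the column at each $x$ with $1\le x<x_f$ is empty or finite: by (c) such a column is a set of positive integers closed downward, hence a finite initial segment of $\NN$ unless it is all of $\NN$, which cannot happen for $x<x_f$. Since there are only finitely many such positions, $H:=\max\{\,n:(x,n)\in S_\Gamma^s,\ 1\le x<x_f\,\}$ (read as $0$ if no column precedes $x_f$) is finite. If $(x_f,n)=(a,b)+(c,d)$ nontrivially, then $0<a,c<x_f$, so $b\le H$ and $d\le H$, whence $n=b+d\le 2H$; therefore $(x_f,n)\in\mathcal A(S_\Gamma^s)$ for every $n>2H$, proving (b).

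Most of these steps are immediate once one has the coordinatewise description of $S_\Gamma^s$ together with the observation that any factorization into non-identity elements strictly decreases the first coordinate. The one step needing a genuine (if small) idea is the $s$-shift used for (e): deleting the smallest term of an arithmetic progression contained in $\Gamma$ and relabeling produces a shorter element of $S_\Gamma^s$ based at $x+s$ rather than $x$. The other place deserving care is the finiteness of $H$ in (b), which is precisely where the defining property of $x_f$ --- being the \emph{first} infinite column --- is used: everything strictly to its left is a finite column, and there are only finitely many columns there.
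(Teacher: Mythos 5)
Your proof is correct and, where the paper gives details at all, follows the same route: part (d) via the contrapositive truncation of a factorization and part (g) via the explicit splitting $(x,n)=(x_0,1)+(x-x_0,n-1)$ are exactly the paper's arguments. The remaining parts are left to the reader in the paper, and your fill-ins (the $s$-shift for (e), downward closure for (c), and the finite bound $H$ on column heights left of $x_f$ for (b)) are all valid.
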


\begin{proof}
If $(x,n) = (y,p) + (z,q)$ with $n > 2$, then at least one of $p$ or $q$ is greater than $1$, yielding a factorization for $(x,n-1)$.  This proves the contrapositive of (d).  For (g), notice that for $n \geq 2$ and $x > \mathcal F(\Gamma) + x_0$, we can write $(x,n) = (x_0,1) + (x-x_0, n-1)$.  The remaining proofs are left to the reader.  
\end{proof}

The above lemma motivates the following definitions.

\begin{defn}\label{d:atoms}
Fix a Leamer monoid $S_\Gamma^s$ and an  $x \in \NN$ whose column in $S_\Gamma^s$ is non-empty.  The element $(x,1) \in S_\Gamma^s$ is called \emph{trivial}.  The column at $x$ is called \emph{purely irreducible} (or \emph{purely atomic}) if it contains only atoms, \emph{mixed} if it contains both reducible and non-trivial irreducible elements, and \emph{purely reducible} otherwise.
\end{defn}

\begin{example}
In Figure~\ref{f:leamergraph}, the red dots with height 1 are all trivial atoms, whereas the element $(42,7)$ is a nontrivial atom.  The column at 60 is purely atomic, since every element is irreducible, and the column at 52 is purely reducible, since its only atom has height 1.  The column at 48 has 3 reducible elements, but every element with height at least 5 is irreducible, so this column is mixed.  
\end{example}

\begin{remark} \label{r:bound}
The Leamer monoid in Example~\ref{e:7_10_s=3} has $x_0 = 7$ and $\mathcal F (\Gamma) = 53$ and has non-trivial irreducibles at $x = \mathcal F(\Gamma) + x_0 = 60$.  Thus, the bound presented in Lemma~\ref{l:atoms}(g) is sharp.  On the other hand, the Leamer monoid in Example~\ref{e:13_17_22_40_s=4} has $\mathcal F(\Gamma) + x_0 = 89 +13 = 104$ with the last non-trivial irreducible occurring at $98$.  Thus, the bound is not always reached.  
\end{remark}

\section{Elasticities and Delta Sets}\label{s:elasticitydelta}

In atomic, cancellative, commutative monoids, various invariants have been used to measure how far an element is from have unique factorization into irreducible elements.  In this section, we investigate several of these invariants for Leamer monoids.  

We begin by analyzing the elasticity of Leamer monoids.  For more detail on elasticity, see~\cite[Definition~1.4.1]{bible}.  

\begin{defn}\label{d:elasticity}
Fix a Leamer monoid $S_\Gamma^s$.  The \emph{set of lengths} of $(x,n) \in S_\Gamma^s$ is given by 
$$\mathcal L(x,n) = \{r : (x,n) = \textstyle\sum_{i = 1}^r (x_i,n_i), (x_i,n_i) \in \mathcal A(S_\Gamma^s)\}.$$
Let $\ell(x,n) = \min \mathcal L(x,n)$ and $L(x,n) = \max\mathcal L(x,n)$ denote the minimum and maximum factorization lengths of $(x,n)$, respectively.  The \emph{elasticity} of $(x,n)$ is given by $\rho(x,n) = L(x,n)/\ell(x,n)$, and the \emph{elasticity} of $S_\Gamma^s$ is given by 
$$\rho(S_\Gamma^s) = \sup\{\rho(x,n) : (x,n) \in S_\Gamma^s\}.$$
\end{defn}

We begin by showing that every Leamer monoid has infinite elasticity.

\begin{thm}\label{t:infelasticity}
For any Leamer monoid $S_\Gamma^s$, $\rho(S_\Gamma^s) = \infty.$
\end{thm}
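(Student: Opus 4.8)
The plan is to exhibit, for every sufficiently large $k$, a single element of $S_\Gamma^s$ whose maximal factorization length equals $k$ while its minimal factorization length is at most $2$. Since $\rho(S_\Gamma^s)$ is the supremum of $\rho(x,n)=L(x,n)/\ell(x,n)$, such a family forces $\rho(S_\Gamma^s)\geq k/2$ for arbitrarily large $k$, hence $\rho(S_\Gamma^s)=\infty$. The element I would use is $(kx_f,k)$, where $x_f=x_f(S_\Gamma^s)$ is the first infinite column.

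First I would compute the maximal length. Since the column at $x_f$ is infinite, $(x_f,1)\in S_\Gamma^s$, and by Lemma~\ref{l:atoms}(a) it is an atom. Consequently $(kx_f,k)=k\cdot(x_f,1)$ lies in $S_\Gamma^s$ and this equality displays a factorization of length $k$, so $L(kx_f,k)\geq k$. Conversely every atom has second coordinate at least $1$, so no factorization of $(kx_f,k)$ can use more than $k$ atoms; hence $L(kx_f,k)=k$.

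For the minimal length I would invoke Lemma~\ref{l:atoms}(b) to fix an $N$ with $(x_f,n)\in\mathcal A(S_\Gamma^s)$ for all $n\geq N$, and then restrict to $k$ large enough that $k-1\geq N$ and $(k-1)x_f>\mathcal F(\Gamma)$. The second condition ensures (by Lemma~\ref{l:atoms}(f), i.e.\ $\Gamma$ is cofinite) that $\bigl((k-1)x_f,1\bigr)\in S_\Gamma^s$, which is then an atom by Lemma~\ref{l:atoms}(a), while $(x_f,k-1)\in\mathcal A(S_\Gamma^s)$ by the choice of $N$. Since
\[
(kx_f,k)=(x_f,k-1)+\bigl((k-1)x_f,1\bigr),
\]
we get $\ell(kx_f,k)\leq 2$. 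Therefore $\rho(kx_f,k)\geq k/2$, and letting $k\to\infty$ gives $\rho(S_\Gamma^s)=\infty$.

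The argument is short, and the one genuinely substantive ingredient is Lemma~\ref{l:atoms}(b): it supplies atoms of unbounded height all sitting in the single column at $x_f$, and these are precisely what keeps the minimal length bounded (by $2$) while the maximal length grows without bound. Everything else is routine bookkeeping — membership of the relevant points in $S_\Gamma^s$ comes from $S_\Gamma^s$ being a submonoid of $\NN^2$ together with the fact that columns beyond $\mathcal F(\Gamma)$ are infinite, and the inequality $L(x,n)\leq n$ is immediate. The only point worth stating explicitly is that Lemma~\ref{l:atoms}(b) is read as ``$(x_f,n)$ is an atom for all large $n$''; even the weaker reading ``for infinitely many $n$'' would suffice here, since $L(kx_f,k)=k$ holds for every $k$.
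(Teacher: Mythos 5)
Your proof is correct and follows essentially the same route as the paper: it uses the element $(k x_f, k)$ with the long factorization $k\cdot(x_f,1)$ and the short factorization $(x_f,k-1)+\bigl((k-1)x_f,1\bigr)$, exactly as in the paper's argument, just with the atomicity of each summand (via Lemma~\ref{l:atoms}(a),(b)) spelled out more explicitly.
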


\begin{proof}
For $t >\!\!> 0$, we have 
$$(t \cdot x_f, t) = t(x_f,1) = ((t-1) \cdot x_f, 1) + (x_f,t-1)$$
so $\rho(t \cdot x_f, t) = \frac{t}{2}$.  Letting $t$ tend to infinity completes the argument.  
\end{proof}

Variants of elasticity also appear in factorization theory literature (see~\cite{fullelas} and~\cite{katoms}).  For convenience, we define these here.  

\begin{defn}\label{d:fullyelastic}
The \emph{$k$-th refined elasticity of $S_\Gamma^s$} is defined by 
$$\rho_k(S_\Gamma^s) = \sup\left\{\rho(x,n) \, : \, (x,n) \in S_\Gamma^s \text{ nonzero with } k \in \mathcal L(x,n)\right\}$$  
for $k \ge 2$.  We say $S_\Gamma^s$ is \emph{fully elastic} if $\{\rho(x,n) : (x,n) \in S_\Gamma^s\} = \QQ \cap [1,\infty)$.  
\end{defn}

We now compute the refined elasticity for Leamer monoids.  

\begin{thm}\label{t:fullyelastic}
Let $S_\Gamma^s$ be a Leamer monoid.  For all $k \geq 2$, $\rho_k(S_\Gamma^s) = \infty$.   Moreover, $S_\Gamma^s$ is never fully elastic.
\end{thm}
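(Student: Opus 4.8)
The plan is to handle the two claims in turn, in both cases leaning on the elements $(t\cdot x_f,t)$ that already appear in the proof of Theorem~\ref{t:infelasticity}. For the refined elasticities, fix $k\ge 2$. As in the proof of Theorem~\ref{t:infelasticity}, for $t\gg 0$ we have $\ell(t\cdot x_f,t)=2$ and $L(t\cdot x_f,t)=t$ — every atom contributes at least $1$ to the second coordinate, so no factorization of $(\,\cdot\,,t)$ has more than $t$ terms, while $t(x_f,1)$ realizes length $t$ and $((t-1)x_f,1)+(x_f,t-1)$ realizes length $2$ using Lemma~\ref{l:atoms}(b) — so $\rho(t\cdot x_f,t)=t/2$. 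The one extra point to check is that $k\in\mathcal L(t\cdot x_f,t)$ once $t$ is large: since $x_f$ lies in an infinite column, $\bigl((t-k+1)x_f,1\bigr)$ is a trivial atom, and $(x_f,t-1),(x_f,t-k+1)\in\mathcal A(S_\Gamma^s)$ for $t\gg 0$ by Lemma~\ref{l:atoms}(b), so
$$(t\cdot x_f,t)=(k-2)(x_f,1)+\bigl((t-k+1)x_f,1\bigr)+(x_f,t-k+1)$$
is a factorization into exactly $k$ atoms. Hence $\rho_k(S_\Gamma^s)\ge \rho(t\cdot x_f,t)=t/2$ for all $t\gg 0$, and letting $t\to\infty$ gives $\rho_k(S_\Gamma^s)=\infty$.

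For the claim that $S_\Gamma^s$ is never fully elastic, the strategy is to show that the set of elasticities omits an entire interval of the form $(1,1+\tfrac1C)$ for a suitable constant $C=C(\Gamma,s)$; since this interval contains rationals, $S_\Gamma^s$ cannot be fully elastic. The crux is an auxiliary claim: the minimum factorization length $\ell(x,n)$ is \emph{bounded} over all nonzero $(x,n)\in S_\Gamma^s$, say by $C$. Granting this, if $(x,n)$ is reducible with $\rho(x,n)>1$ then $L(x,n)\ge\ell(x,n)+1$, so $\rho(x,n)=L(x,n)/\ell(x,n)\ge 1+1/\ell(x,n)\ge 1+1/C$; thus every elasticity lies in $\{1\}\cup[1+\tfrac1C,\infty)$, which is what we need.

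To establish the bound on $\ell$, I would argue by cases on the sizes of $x$ and $n$. Since every non-identity element of $S_\Gamma^s$ has first coordinate at least $x_0$ (by Lemma~\ref{l:atoms}(c)), any factorization of $(x,n)$ uses at most $x/x_0$ atoms, so $\ell(x,n)\le x/x_0$ disposes of all $x\le x_f+\mathcal F(\Gamma)$. Next, fix a threshold $m^\ast$ with $(x_f,m)\in\mathcal A(S_\Gamma^s)$ for all $m\ge m^\ast$, which exists by Lemma~\ref{l:atoms}(b); then $\ell(x,n)\le L(x,n)\le n\le m^\ast$ whenever $n\le m^\ast$. Finally, if $x>x_f+\mathcal F(\Gamma)$ and $n>m^\ast$, both $x-x_f$ and $x-x_f+s$ exceed $\mathcal F(\Gamma)$ and hence lie in $\Gamma$, so $(x-x_f,1)$ is a trivial atom, $(x_f,n-1)\in\mathcal A(S_\Gamma^s)$, and $(x,n)=(x-x_f,1)+(x_f,n-1)$ shows $\ell(x,n)\le 2$. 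Taking $C$ to be the largest of these three bounds finishes the claim.

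The hard part will be this uniform bound on $\ell$: once it is guessed, the case analysis above is routine, but recognizing that $\ell$ should be bounded at all is the non-obvious step, and the argument genuinely uses both Lemma~\ref{l:atoms}(b) (to handle arbitrarily tall columns over $x_f$) and the fact that $x_0$ provides a uniform lower bound on the first coordinate of any atom. A secondary point to be careful about is the exact value of $C$ and hence which rationals in $(1,1+\tfrac1C)$ are excluded, but for the statement as given any such $C$ suffices.
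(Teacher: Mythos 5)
Your proof is correct and follows essentially the same route as the paper: exhibit elements supported on the column at $x_f$ with elasticity $t/2$ and an explicit length-$k$ factorization to get $\rho_k(S_\Gamma^s)=\infty$, then show $\ell(x,n)$ is uniformly bounded so that elasticities in an interval $(1,1+\tfrac{1}{C})$ are never attained. You simply make explicit some details the paper leaves to the reader (the case analysis bounding $\ell$ and the final deduction about omitted rational elasticities).
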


\begin{proof}
Fix $n_f$ such that $(x_f,n_f)$ is an atom in $S_\Gamma^s$.  For $t \ge k \ge 2$, we have 
$$t \cdot (x_f, n_f) = ((k-1)x_f,tn_f - 1) + ((t - k + 1)x_f, 1)$$
so $\rho(tx_f, tn_f) = \frac{t}{2}$ and $k \in \mathcal L(tx_f, tn_f)$.  In fact, whenever $n > n_f$ and $x > x_f + \mathcal F(\Gamma)$, we have $(x,n) = (x_f,n - 1) + (x - x_f, 1)$, so $\ell(x,n) = 2$.  This shows 
$$\max\{\ell(x,n) : (x,n) \in S_\Gamma^s\} < \infty$$
from which the second claim follows.  
\end{proof}

We continue this section with a discussion on the Delta sets of Leamer monoids.  In particular, we show that the Delta set of any Leamer monoid is finite, and we give a method to compute its maximal element.  For more information on Delta sets of commutative cancellative monoids, see \cite[Section~1.4]{bible}.

\begin{defn}
Fix a Leamer monoid $S_\Gamma^s$.  The \emph{Delta set} of $(x,n) \in S_\Gamma^s$ is given by 
$$\Delta(x,n) = \{\ell_i - \ell_{i-1} : 2 \le i \le k\},$$
where $\mathcal L(x,n) = \{\ell_1, \ldots, \ell_k\}$.  The \emph{Delta set} of $S_\Gamma^s$ is given by 
$$\Delta(S_\Gamma^s) = \textstyle\bigcup_{(x,n) \in S_\Gamma^s} \Delta(x,n).$$
\end{defn}

\begin{thm}\label{t:nstar}
Fix a Leamer monoid $S_\Gamma^s$.  Let 
$$C = \{(x,n) \in S_\Gamma^s \setminus \mathcal A(S_\Gamma^s) : (x,n+1) \notin S_\Gamma^s \setminus \mathcal A(S_\Gamma^s)\}$$
that is, the set of reducible elements which lie in finite or mixed columns and have maximal height.  Then $|\Delta(S_\Gamma^s)| <\infty$, and in fact $\max\Delta(S_\Gamma^s) \leq n^* - 1$, where 
$$n^* = \max\{n : (x,n) \in C\}.$$
\end{thm}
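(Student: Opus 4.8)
The plan is to show that whenever $(x,n)$ has two factorization lengths that differ by more than $n^*-1$, we can produce a shorter factorization and thereby insert an intermediate length into $\mathcal L(x,n)$, contradicting maximality of the gap. First I would observe that since $n^*$ exists (the set $C$ is nonempty and bounded: every column is eventually infinite by Lemma~\ref{l:atoms}(f), so $C$ is finite), it suffices to show that for any reducible $(x,n)$ and any factorization of it of length $\ell$, there is a factorization of length $\ell'$ with $\ell < \ell' \le \ell + n^* - 1$, provided $\ell < L(x,n)$. The key structural fact I would extract from the definition of $C$ is: any atom $(y,p)$ with $p \ge 2$ (a nontrivial atom) satisfies $p \le n^*$. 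Indeed, if $(y,p)$ is a nontrivial atom with $p \ge 2$, then by Lemma~\ref{l:atoms}(c) the whole column below height $p$ lies in $S_\Gamma^s$; the atom itself is irreducible, but $(y,p-1)$ may or may not be, and in any case the column at $y$ is not infinite (an infinite column contributes only finitely many atoms, all of bounded height — more care needed here), so there is a reducible element of maximal height in that column, which lies in $C$, forcing $p-1 \le n^*$... this needs tightening, but the moral is that nontrivial atoms have height at most $n^*$.

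Next I would take a factorization $(x,n) = \sum_{i=1}^{\ell} (x_i,n_i)$ that is \emph{not} of maximal length, so some summand is reducible or, more usefully, so that the factorization can be refined. The natural move: if $\ell < L(x,n)$, pick any factorization achieving a strictly larger length, and compare. The cleaner approach is to argue directly on the Delta set: suppose $\ell_{i-1}, \ell_i$ are consecutive lengths in $\mathcal L(x,n)$ with $\ell_i - \ell_{i-1} \ge n^*$. Take a factorization of length $\ell_{i-1}$; since it is not the longest, at least one summand $(x_j, n_j)$ must be "splittable" in the sense that replacing it in a length-$\ell_i$ factorization changes something — more precisely, I would compare a length-$\ell_{i-1}$ factorization with a length-$\ell_i$ one and use a trading argument: the extra $\ell_i - \ell_{i-1} \ge n^*$ atoms in the longer factorization, when regrouped, must contain a sub-collection summing to a single element that also appears (or can appear) as a bounded-height piece, producing an intermediate length. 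The height bound $n^*$ on nontrivial atoms is what controls how many trivial atoms $(y,1)$ can be consolidated: a block of trivial atoms $(y_1,1),\dots,(y_r,1)$ sums to $(\sum y_i, r)$, and if $r > n^*$ this sum is reducible (since nontrivial atoms have height $\le n^*$, height-$>n^*$ forces a split), so we can always break it as a sum of $2$ atoms, dropping the length by at least $r - 2$ but by at most... — the arithmetic here gives the $n^*-1$ bound.

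The main obstacle I expect is making the trading/consolidation argument precise: specifically, showing that given a factorization which is not of maximal length, one can always find a sub-sum that is reducible of height at most $n^*$, split it into exactly two atoms, and conclude that the new length lands strictly between the old length and the maximal one, with the jump bounded by $n^* - 1$. This requires carefully handling the interaction between the $x$-coordinates and $n$-coordinates (summing atoms adds both coordinates, and whether the result stays in a finite/mixed column versus an infinite column matters), and ruling out the degenerate case where every summand is a trivial atom but no consolidation helps — here one would use that $(x,n)$ itself being reducible with large $n$ means $n$ is large enough that $(x,n)$ or a piece of it sits in an infinite column, giving room. I would also need to double-check the edge behavior when $\ell_{i-1} = \ell(x,n)$ is already $2$. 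Once the "split a height-$\le n^*$ reducible block into two atoms" lemma is in hand, iterating it to fill in all gaps of size $\ge n^*$ finishes the proof, and the bound $\max \Delta(S_\Gamma^s) \le n^* - 1$ follows immediately.
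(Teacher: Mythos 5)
Your argument rests on a structural claim that is false in every Leamer monoid: you assert that every nontrivial atom $(y,p)$, $p\ge 2$, satisfies $p\le n^*$. The quantity $n^*$ bounds the heights of certain \emph{reducible} elements (those in $C$), not of atoms. Nontrivial atoms of arbitrarily large height always exist: by Lemma~\ref{l:atoms}(b), $(x_f,n)\in\mathcal A(S_\Gamma^s)$ for all $n\gg 0$, and by Lemma~\ref{l:atoms}(d) once a column contains a nontrivial atom every higher element of that column is again an atom, so mixed and purely atomic infinite columns (e.g.\ the column at $x=60$ in Example~\ref{e:7_10_s=3}) contain atoms of unbounded height. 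Your parenthetical hope that ``an infinite column contributes only finitely many atoms, all of bounded height'' is exactly backwards. Since the consolidation arithmetic in your trading argument is driven by this height bound, it collapses. Moreover, the statement you do get for free --- that a sum $(\sum y_i,r)$ of $r>n^*$ trivial atoms is reducible --- is vacuous (any sum of two or more atoms is reducible); the nontrivial point, which you flag as an obstacle but never establish, is that such an element can be rewritten as a sum of \emph{exactly two} atoms so that lengths can be adjusted in controlled steps.

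The correct way $n^*$ enters is through reducible elements of large height: if $(x,n)$ is reducible with $n\ge n^*+2$, its column cannot be finite or mixed, since the topmost reducible element of such a column lies in $C$ and hence has height at most $n^*$, while reducibility propagates downward by Lemma~\ref{l:atoms}(d); so the column at $x$ is infinite, and one can write $(x,n)=(y,1)+(z,n-1)$ with $(z,n-1)$ a nontrivial atom in an infinite column, giving $\ell(x,n)=2$. For $n\le n^*+1$ one has trivially $\ell(x,n)\le n\le n^*+1$, since every factor has height at least $1$. Thus $\ell(x,n)\le n^*+1$ for all reducible $(x,n)$, and the bound $\max\Delta(S_\Gamma^s)\le n^*-1$ follows from the standard reduction of $\max\Delta$ to minimal factorization lengths, \cite[Lemma~4.1]{schaeffer} (compare Lemma~\ref{l:deltafactlen} in this paper). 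Your proposal proves neither a bound on $\ell(x,n)$ nor a correct gap-filling mechanism, so as written it does not establish the theorem; redirecting the argument toward bounding minimal lengths, rather than toward bounding atom heights, is what is needed.
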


\begin{proof}
Fix a reducible element $(x,n) \in S_\Gamma^s$.  By~\cite[Lemma 4.1]{schaeffer}, it suffices to show that $\ell(x,n) \le n^* + 1$.  If $n \le n^* + 1$, then so is $\ell(x,n)$.  If $n \ge n^* + 2$, the column at $x$ is infinite, so we can write $x = y + z$ where there is a column at $y$ and an infinite column containing nontrivial irreducibles at $z$.  Since $(x,n) = (y,1) + (z,n-1)$ is a sum of atoms, we have $\ell(x,n) = 2$.  
\end{proof}

We now examine the length set $\mathcal L(x,n)$ for elements $(x,n) \in S_\Gamma^s$ with $x >\!\!> n$.  

\begin{prop}\label{p:colstab}
Fix a Leamer monoid $S_\Gamma^s$.  Let $s(x,n) = \frac{n}{x}$ for $(x,n) \in \NN^2$, and let 
$$s_L = \max\{s(x,n) : (x,n) \in S_\Gamma^s, (x,n+1) \notin S_\Gamma^s\}.$$
For each $(x,n) \in S_\Gamma^s \setminus \mathcal A(S_\Gamma^s)$ with $n > \lfloor s_L x \rfloor$, we have $\mathcal L(x,n) = \mathcal L(x,n+1) = \{2, 3, \ldots, h\}$ for some integer $h \ge 2$.  In particular, $\Delta(x,n) \subseteq \{1\}$.  
\end{prop}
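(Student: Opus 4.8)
The plan is to show that for $(x,n) \in S_\Gamma^s \setminus \mathcal A(S_\Gamma^s)$ with $n > \lfloor s_L x\rfloor$, the length set $\mathcal L(x,n)$ is an interval of integers starting at $2$, and that passing from $n$ to $n+1$ does not change this set. The key structural fact to establish first is that the condition $n > \lfloor s_L x\rfloor$ forces the column at $x$ to be infinite: by the definition of $s_L$, every finite-height column at some $x'$ has its top element $(x', n')$ satisfying $n'/x' \le s_L$, so if the column at $x$ were finite of height $n'$ we would have $n \le n' \le \lfloor s_L x\rfloor$, a contradiction. Thus the column at $x$ is infinite, and by Lemma~\ref{l:atoms}(b) it contains nontrivial irreducibles of all sufficiently large heights.

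Next I would pin down $\ell(x,n) = 2$. Since $x > \mathcal F(\Gamma)$ and $n \geq 2$ — here one checks $x > \mathcal F(\Gamma)$ follows from $n > \lfloor s_L x \rfloor \geq 1$ together with Lemma~\ref{l:atoms}(f), or more carefully from the fact that an infinite column can only occur at $x > \mathcal F(\Gamma)$ — we can split $x = x_0 + (x - x_0)$; if additionally $x > \mathcal F(\Gamma) + x_0$ then $(x - x_0, n-1) \in S_\Gamma^s$ by Lemma~\ref{l:atoms}(f), and $(x,n) = (x_0,1) + (x-x_0,n-1)$ exhibits a length-$2$ factorization. (The boundary cases where $\mathcal F(\Gamma) < x \le \mathcal F(\Gamma) + x_0$ contribute only finitely many columns and can be absorbed by enlarging the threshold $s_L x$; this bookkeeping is the kind of routine adjustment I would handle inline.) Since $(x,n)$ is reducible, $\ell(x,n) \geq 2$, so $\ell(x,n) = 2$. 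The same argument applied to $(x,n+1)$ gives $\ell(x,n+1) = 2$.

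For the maximal length and the interval property, I would invoke \cite[Lemma 4.1]{schaeffer} exactly as in the proof of Theorem~\ref{t:nstar}: that result guarantees the length set of a reducible element in this kind of monoid is a full interval $\{\ell(x,n), \ell(x,n)+1, \ldots, L(x,n)\}$ once one controls the minimal length, so $\mathcal L(x,n) = \{2,3,\ldots,h\}$ with $h = L(x,n) \geq 2$. It then remains to show $L(x,n) = L(x,n+1)$, i.e. $\mathcal L(x,n) = \mathcal L(x,n+1)$. One direction: given any factorization $(x,n+1) = \sum_i (x_i,n_i)$ into atoms, since $(x,n+1)$ lies in an infinite column with nontrivial atoms arbitrarily high, at least one summand, say $(x_1,n_1)$, has $n_1 \geq 2$ large enough that $(x_1, n_1 - 1)$ is still a nontrivial atom (using Lemma~\ref{l:atoms}(b) and the fact that we may choose the factorization so the "bulk" of the height sits in a single atom in the infinite column at some $z \le x$); replacing $(x_1,n_1)$ by $(x_1,n_1-1)$ gives a factorization of $(x,n)$ of the same length, so $\mathcal L(x,n+1) \subseteq \mathcal L(x,n)$. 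The reverse inclusion is symmetric, lifting a chosen atom's height by one.

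The main obstacle I anticipate is this last height-shifting step: to make it rigorous I need to know that \emph{every} atomic factorization of $(x,n)$ (resp. $(x,n+1)$) can be arranged to have a summand living high in an infinite column whose height can be decremented (resp. incremented) while staying atomic — in other words, that one cannot have all the "height" spread thinly across many trivial atoms $(x_i,1)$. This should follow because the trivial atoms contribute total height equal to their number, while the total height $n$ or $n+1$ exceeds $\lfloor s_L x\rfloor$, which is large relative to the number of summands available (bounded by $x/x_0$); pushing this counting through, and reconciling it cleanly with the interval result of \cite{schaeffer}, is where the real care is needed, and it is essentially the same phenomenon that drove Theorem~\ref{t:nstar}.
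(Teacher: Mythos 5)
Your proposal does not go through as written; the gaps are concrete.  First, your length-$2$ step is incorrect: $(x,n)=(x_0,1)+(x-x_0,n-1)$ is a decomposition into two monoid elements, but not into two \emph{atoms} --- by Lemma~\ref{l:atoms}(g), the element $(x-x_0,n-1)$ is reducible whenever $x-x_0>\mathcal F(\Gamma)+x_0$ and $n-1\ge 2$, which is exactly the regime of interest, so this does not show $2\in\mathcal L(x,n)$.  (The correct move, as in Theorem~\ref{t:fullyelastic}, is the reverse: put the height on an atom sitting in an infinite column that contains nontrivial irreducibles, and use a trivial atom for the remaining $x$-coordinate.)  Moreover, ``enlarging the threshold'' to dodge the boundary cases would prove a different statement: the cutoff $n>\lfloor s_Lx\rfloor$ is part of the proposition and cannot be adjusted.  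Second, your appeal to \cite[Lemma~4.1]{schaeffer} misstates what that lemma gives: it bounds $\max\Delta$ of the whole monoid in terms of minimal factorization lengths (that is how it is used in Theorem~\ref{t:nstar}); it does not assert that the length set of any individual reducible element is an interval.  The interval property $\Delta(x,n)\subseteq\{1\}$ is precisely the content of this proposition, so it cannot be imported from there.

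The single missing idea --- the one you yourself flag as the ``main obstacle'' and leave unproven --- is the paper's slope argument, and it is what makes every step work.  For \emph{any} atomic factorization $(x,n)=\sum_{i=1}^k(x_i,n_i)$, the hypothesis gives
$$s_L<\tfrac{n}{x}=s\Big(\textstyle\sum_i x_i,\sum_i n_i\Big)\le\max_i s(x_i,n_i),$$
so some factor $(x_j,n_j)$ has slope exceeding $s_L$; by the definition of $s_L$ its column must be infinite and $n_j>1$, and then Lemma~\ref{l:atoms}(d) makes $(x_j,a)$ an atom for all $a\ge n_j$.  Consolidating, $(x,n)=(x_j,n-k+1)+\sum_{i\ne j}(x_i,1)$ is again an atomic factorization of length $k$.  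From this ``special form'' everything follows at once: raising the tall atom by one gives $k\in\mathcal L(x,n+1)$ (this is your ``increment'' direction, now rigorous); merging two trivial atoms $(x_m,1)+(x_{m'},1)$ into $(x_m+x_{m'},1)$ while raising the tall atom by one gives a factorization of $(x,n)$ of length exactly $k-1$, which yields $\mathcal L(x,n)=\{2,\ldots,h\}$ and $\Delta(x,n)\subseteq\{1\}$, with $2\in\mathcal L(x,n)$ obtained by merging all the trivial atoms --- no separate minimal-length argument and no citation of \cite{schaeffer} is needed.  Without this averaging step, your height-shifting plan for $\mathcal L(x,n)=\mathcal L(x,n+1)$ has no way to guarantee a decrementable or incrementable tall summand, so the proof as proposed is incomplete.
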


\begin{proof}
Fix $(x,n) \in S_\Gamma^s$ with $n > \lfloor s_L x \rfloor$, and a factorization $(x,n) = \sum_{i = 1}^k (x_i, n_i)$.  Then
$$x \cdot s_L < n = x \cdot s(x,n) = x \cdot s(x_1 + \cdots + x_k, n_1 + \cdots + n_k) \le x \cdot \textstyle\max_i\{s(x_i,n_i)\},$$
which gives $s_L < \max_i\{s(x_i,n_i)\}$, so some $(x_j, n_j)$ must reside in an infinite column with $n_j > 1$.  By Lemma~\ref{l:atoms}(d), we have $(x_j,a) \in \mathcal A(S_\Gamma^s)$ for all $a \ge n_j$.  This yields a factorization $(x,n) = (x_j,n-k+1) + \sum_{i \ne j} (x_i,1)$.  This means $(x,n+1) = (x_j,n-k+2) + \sum_{i \ne j} (x_i,1)$, and for $k \ge 3$, $(x,n) = (x_j,n-k+2) + (x_m + x_n,1) + \sum_{i \ne j,m,n} (x_i,1)$ for distinct indices $j, m, n  \le k$.  This proves both claims.  
\end{proof}

The proof of the following proposition is similar to that of Proposition \ref{p:colstab} and is left to the reader.  

\begin{prop}\label{p:rowstab}
Fix a Leamer monoid $S_\Gamma^s$.  For each $(x,n) \in S_\Gamma^s$ with $x > n\mathcal F(\Gamma)$, we have $\mathcal L(x,n) = \mathcal L(x+1,n)$.  In particular, $\Delta(x,n) = \Delta(x+1,n)$.  
\end{prop}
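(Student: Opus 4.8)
The plan is to mimic the argument of Proposition \ref{p:colstab}, using the row-stabilization threshold $x > n\mathcal F(\Gamma)$ in place of the column-stabilization threshold. First I would show the inclusion $\mathcal L(x,n) \subseteq \mathcal L(x+1,n)$. Fix $(x,n) \in S_\Gamma^s$ with $x > n\mathcal F(\Gamma)$ and any factorization $(x,n) = \sum_{i=1}^k (x_i,n_i)$ into atoms. Since the $n_i$ are positive and sum to $n$, and the $x_i$ are positive and sum to $x > n\mathcal F(\Gamma) \ge k\mathcal F(\Gamma)$ (using $k \le \sum n_i = n$), some index $j$ must have $x_j > \mathcal F(\Gamma)$. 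By Lemma~\ref{l:atoms}(f) the column at $x_j$ is infinite, and since $x_j + 1 > \mathcal F(\Gamma)$ the column at $x_j + 1$ is infinite as well; moreover, for an infinite column, $(x_j, a)$ is an atom for all sufficiently large $a$ (Lemma~\ref{l:atoms}(b)), and in fact by Lemma~\ref{l:atoms}(d) once $(x_j,n_j)$ is an atom with $n_j > 1$ every $(x_j,a)$ with $a \ge n_j$ is an atom. The one delicate point is the case $n_j = 1$: here I would instead slide the extra $+1$ in the $x$-coordinate onto a point in this infinite column, replacing $(x_j,1)$ by $(x_j + 1, 1)$, which is still an atom by Lemma~\ref{l:atoms}(a). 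Either way, $(x+1,n) = (x_j+1,n_j) + \sum_{i \ne j}(x_i,n_i)$ exhibits a factorization of $(x+1,n)$ of the same length $k$, so $k \in \mathcal L(x+1,n)$.

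For the reverse inclusion $\mathcal L(x+1,n) \subseteq \mathcal L(x,n)$, I would apply exactly the same reasoning to $(x+1,n)$, which also satisfies the threshold $x + 1 > n\mathcal F(\Gamma)$: given an atomic factorization of $(x+1,n)$, some atom $(x_j,n_j)$ lies in an infinite column with $x_j > \mathcal F(\Gamma)$, and one subtracts $1$ from its $x$-coordinate (either from a $(x_j,1)$ atom, which requires checking $x_j - 1 > \mathcal F(\Gamma)$, hence still a valid trivial atom, or from an $(x_j,a)$ atom with $a \ge 2$ in an infinite column, still an atom). To make the downward step clean I would note that since $x + 1 > n\mathcal F(\Gamma) \ge \mathcal F(\Gamma) + 1$ there is actually room: one can always choose $j$ with $x_j > \mathcal F(\Gamma) + 1$ by a slightly more careful counting (distributing $x + 1 > n\mathcal F(\Gamma)$ among $k \le n$ parts forces some part to exceed $\mathcal F(\Gamma)$, and one can arrange strict slack when $x+1$ is large enough relative to $n$). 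Combining the two inclusions gives $\mathcal L(x,n) = \mathcal L(x+1,n)$, and the statement about $\Delta(x,n) = \Delta(x+1,n)$ is then immediate from the definition of the Delta set as a function of the length set.

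The main obstacle I anticipate is handling the boundary case where the only available atom in an infinite column has height $1$: one must verify that shifting its $x$-coordinate by $\pm 1$ keeps it inside an infinite column of $S_\Gamma^s$ and keeps it an atom, which is where the precise form of the bound $x > n\mathcal F(\Gamma)$ (as opposed to something weaker) is used — it guarantees enough of a gap above $\mathcal F(\Gamma)$ that the shifted point still lies strictly beyond the Frobenius number. Everything else is a routine transcription of the column-stabilization proof with the roles of the two coordinates interchanged.
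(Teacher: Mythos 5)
Your forward inclusion has a genuine gap, and it sits in the case you labeled as unproblematic rather than the one you flagged. When the chosen factor has $n_j \ge 2$, your concluding display $(x+1,n) = (x_j+1,n_j) + \sum_{i \ne j}(x_i,n_i)$ is a factorization into atoms only if $(x_j+1,n_j)$ is an atom, and nothing you cite gives this: Lemma~\ref{l:atoms}(b) and (d) concern atoms higher up in the \emph{same} column at $x_j$, and say nothing about the column at $x_j+1$. In fact Lemma~\ref{l:atoms}(g) shows that once $x_j+1 > \mathcal F(\Gamma)+x_0$ and $n_j \ge 2$, the element $(x_j+1,n_j)$ is \emph{reducible}, so your shifted expression typically witnesses a length larger than $k$, not $k$. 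The case $n_j = 1$, which you call the delicate one, is the easy case and you handle it correctly. The same issue recurs in the reverse inclusion, where the promised ``slightly more careful counting'' producing $x_j > \mathcal F(\Gamma)+1$ is not carried out and is false at the boundary: if $x+1 = n\mathcal F(\Gamma)+2$, every $x_i$ can be $\le \mathcal F(\Gamma)+1$, and then $x_j - 1$ need not even support a column (recall $\mathcal F(\Gamma) \notin \Gamma$).

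The repair is not to shift a height-$\ge 2$ factor sideways but to show that such a factor can be avoided: the weighted pigeonhole (if $x_i \le n_i\mathcal F(\Gamma)$ for all $i$, then $x \le n\mathcal F(\Gamma)$) yields some $j$ with $x_j > n_j\mathcal F(\Gamma)$, and when $n_j \ge 2$ this forces $x_j > 2\mathcal F(\Gamma)$, so that Lemma~\ref{l:atoms}(g) (which requires comparing with $\mathcal F(\Gamma)+x_0$) contradicts $(x_j,n_j)$ being an atom; only then does your trivial-atom shift apply, and a similarly careful argument is needed for $\mathcal L(x+1,n) \subseteq \mathcal L(x,n)$. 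That the height-$\ge 2$ case cannot be waved away is shown by $\Gamma = \langle 2,3\rangle$, $s = 1$: here $\mathcal F(\Gamma) = 1$ and $x_0 = 2 = \mathcal F(\Gamma)+1$, the element $(3,2)$ is an atom with $3 > 2\mathcal F(\Gamma)$, yet $(4,2) = (2,1)+(2,1)$ is reducible, so your ``either way'' step would assert that $(4,2)$ is an atom, which is false; indeed $\mathcal L(3,2) = \{1\} \ne \{2\} = \mathcal L(4,2)$, so right at the threshold the claimed equality is delicate and any correct argument must use Lemma~\ref{l:atoms}(g) (and the relation between $x_0$ and $\mathcal F(\Gamma)$) in a way your proposal does not.
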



To conclude this section, we give a method to find $\max\Delta(S_\Gamma^s)$ by giving a bounded region in $\NN^2$ in which it must occur.  This effectively gives an algorithm to compute the maximum value in the Delta set for any given Leamer monoid; see Remark~\ref{r:deltaregion}.  First, we give a technical lemma.  

\begin{lemma}\label{l:deltafactlen}
Fix an atomic, cancellative monoid $M$ with $\lambda = 1 + \sup\{\ell(x) : x \in M\}$ finite.  For $k \in \Delta(M)$, there exists $z \in M$ with 
$$\mathcal L(z) \cap \{r, r+1, \ldots, r + k'\} = \{r, r+k'\}$$
for some $r \le \lambda - k$ and $k' \ge k$.  
\end{lemma}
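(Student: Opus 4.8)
The plan is to start from the definition of the Delta set: since $k \in \Delta(M)$, there is some element $y \in M$ whose length set $\mathcal L(y)$ contains two consecutive values $\ell_{i-1} < \ell_i$ of $\mathcal L(y)$ (listed in increasing order) with $\ell_i - \ell_{i-1} = k$ and no element of $\mathcal L(y)$ strictly between them. Set $r = \ell_{i-1}$ and $k' = \ell_i - \ell_{i-1} = k$; then $\mathcal L(y) \cap \{r, r+1, \ldots, r+k'\} = \{r, r+k'\}$ by construction. The only thing left to arrange is the bound $r \le \lambda - k$, i.e. $\ell_{i-1} + k \le \lambda$, equivalently $\ell_i \le \lambda$. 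This need not hold for the original $y$, since $\ell_i$ could be large; the point of the lemma is that we may replace $y$ by a smaller witness $z$.

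The key step is to produce $z$ from $y$ by "peeling off atoms." First I would take a factorization of $y$ of length exactly $\ell_i$, say $y = a_1 + \cdots + a_{\ell_i}$ with each $a_j \in \mathcal A(M)$. The idea is to successively subtract off atoms from this factorization, each time obtaining a submonoid element $z' = y - (a_{j_1} + \cdots + a_{j_t})$, and to track what happens to the gap. Concretely, I would argue that if $\ell_i > \lambda$, then we can remove a single atom $a$ from the long factorization so that, writing $z' = y - a$, the length set $\mathcal L(z')$ still contains two consecutive values differing by $k$ with nothing between them. The mechanism: removing one atom shifts the long factorization's length down by $1$ to $\ell_i - 1$, and one shows that there is still a "short" factorization of $z'$ whose length is at most $\ell_i - k - 1$ — because removing $a$ from the long factorization cannot force the minimal length to rise above $\ell_i - 1$, while $\ell(z') \le \lambda - 1 < \ell_i - 1$ once $\ell_i > \lambda$, so there must be a jump of size $\ge k$ somewhere in $\{\ell(z'), \ldots, \ell_i - 1\}$, and we take the first such jump at the top. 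Iterating, we decrease $\ell_i$ by $1$ each time until $\ell_i \le \lambda$, at which point the corresponding $r = \ell_{i-1} \le \lambda - k$ and $k' \ge k$ as required.

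The main obstacle I anticipate is making the "the gap survives when we remove an atom" argument precise: one needs to be sure that after deleting an atom the new element still has both a long factorization (length $\ell_i - 1$) and a short one that is at least $k$ below it, and that there is a maximal pair of adjacent lengths in $\mathcal L(z')$ with gap $\ge k$ and nothing in between — this last point is automatic once a gap of size $\ge k$ exists anywhere in the length set, by taking the topmost such adjacent gap. I would be careful to phrase the induction on the quantity $\max \mathcal L(\text{witness})$ (or equivalently on $\ell_i$), with base case $\ell_i \le \lambda$ giving the conclusion directly, and the inductive step reducing $\ell_i$ by at least $1$. Because $\lambda$ is finite by hypothesis, the induction terminates. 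A clean way to organize the bookkeeping is: at each stage keep a witness $w$ together with a chosen pair $r_w = \max\{\ell \in \mathcal L(w) : \ell + k \le \text{something}\}$ — but in the writeup I would avoid that and simply invoke: if $\mathcal L(w) \cap \{r, r+k\} = \{r, r+k\}$ (with the intermediate values absent) and $r + k > \lambda$, then subtracting a suitable atom from a length-$(r+k)$ factorization yields a strictly smaller witness, and conclude by induction.
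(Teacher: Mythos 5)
Your overall strategy (iteratively peel one atom off a long factorization of a witness until the gap sits below $\lambda$) can be made to work, but the pivotal step is asserted rather than proved, and the justification you give for it is a non sequitur. Write the long factorization as $y = b_1\cdots b_{\ell_i}$ with $\ell_i = r+k$, remove an atom $a = b_j$, and set $z' = b_1\cdots \widehat{b_j}\cdots b_{\ell_i}$. From the facts you state --- $\ell_i - 1 \in \mathcal L(z')$ and $\ell(z') \le \lambda - 1 < \ell_i - 1$ --- it does \emph{not} follow that ``there must be a jump of size $\ge k$ somewhere in $\{\ell(z'),\ldots,\ell_i-1\}$'': two lengths lying far apart do not produce an \emph{adjacent} gap of size $\ge k$, since a priori $\mathcal L(z')$ could fill in all intermediate values. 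Likewise your claim that $z'$ has a factorization of length at most $\ell_i-k-1$ does not follow from $\ell(z')\le\lambda-1$, which only gives $\ell(z')\le \ell_i-2$. The missing idea is a lifting argument: appending the removed atom $a$ to any factorization of $z'$ yields a factorization of $y$, so $\mathcal L(z')+1\subseteq \mathcal L(y)$, and hence $\mathcal L(z')$ avoids the window $\{r, r+1,\ldots,r+k-2\}$ inherited from the gap of $y$. Combined with $\ell(z')\le\lambda-1<r+k-1$ this forces $\ell(z')<r$, and then the largest element of $\mathcal L(z')$ below $r$, together with $\ell_i-1\in\mathcal L(z')$, gives an adjacent gap of size $\ge k$ whose top is exactly $\ell_i-1$. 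That control of the top is also what makes your induction terminate: you should induct on the top of the \emph{chosen} gap, not on $\max\mathcal L$ of the witness, which need not decrease under your choice of ``the topmost jump'' and need not even be finite under the stated hypotheses.

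For comparison, the paper's proof applies this same lifting idea in one shot rather than atom by atom: it truncates the length-$(\ell+k)$ factorization after $\lambda$ atoms and takes $z = b_1\cdots b_\lambda$; any factorization of $z$ of length $t$ extends (by the remaining $\ell+k-\lambda$ atoms) to a factorization of $w$ of length $t+\ell+k-\lambda$, so $t\le\lambda-k$ or $t\ge\lambda$, and since $\lambda\in\mathcal L(z)$ while $\ell(z)<\lambda$, choosing $r$ maximal in $\mathcal L(z)$ with $r\le\lambda-k$ gives the window directly, with no induction. Your iterative version is fine once the gap-survival lemma above is actually proved, but as written that lemma --- which you yourself flagged as the main obstacle --- is left unestablished, so the proposal has a genuine gap.
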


\begin{proof}
Fix $w \in M$ with irreducible factorizations $w = a_1 \cdots a_\ell = b_1 \cdots b_{\ell+k}$ and no factorizations of length strictly between $\ell$ and $\ell+k$.  If $\ell + k > \lambda$, then $b_1 \cdots b_\lambda = c_1 \cdots c_r$ for some $r < \lambda$.  Let $z = c_1 \cdots c_r$.  This gives a factorization $w = c_1 \cdots c_r b_{\lambda + 1} \cdots b_{\ell + k}$, so we must have $r + \ell + k - \lambda \le \ell$.  In particular, $r \le \lambda - k$, so for $r$ maximal, $z$ cannot have a factorization with length strictly between $\lambda$ and $r$.  
\end{proof}



\begin{thm}\label{t:maxdelta}
Let $\lambda = 1 + \max\{\ell(x) : x \in S_\Gamma^s\}$, let $s_L$ be defined as in Proposition~\ref{p:colstab} and let $(x_i,n_i)$ denote a nontrivial irreducible in an infinite column.  Then 
$$\max\Delta(S_\Gamma^s) = \max\{\Delta(x,n) : x \le x_B, n \le \lfloor s_L x \rfloor\}$$
where $x_B = \mathcal F(\Gamma) + x_i + (n_i + \lambda)(\mathcal F (\Gamma) + x_0)$.  
\end{thm}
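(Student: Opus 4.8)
The plan is to combine the three "stabilization" results—Proposition~\ref{p:colstab}, Proposition~\ref{p:rowstab}, and Lemma~\ref{l:deltafactlen}—to confine a witness for each $k\in\Delta(S_\Gamma^s)$ to the stated box. First I would recall, via Proposition~\ref{p:colstab}, that any reducible $(x,n)$ with $n>\lfloor s_L x\rfloor$ has $\Delta(x,n)\subseteq\{1\}$, so such points contribute nothing beyond $1$ to $\Delta(S_\Gamma^s)$; hence it suffices to show that every $k\in\Delta(S_\Gamma^s)$ with $k\ge 1$ (the case $k=1$ being automatic once the box is nonempty, which it is) is already realized by some $(x,n)$ with $x\le x_B$ and $n\le\lfloor s_L x\rfloor$. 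Note $\lambda$ is finite by Theorem~\ref{t:fullyelastic} (which gives $\max\{\ell(x,n)\}<\infty$), so Lemma~\ref{l:deltafactlen} applies to $M=S_\Gamma^s$.

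Next, fix $k\in\Delta(S_\Gamma^s)$. By Lemma~\ref{l:deltafactlen} there is $z=(x,n)\in S_\Gamma^s$ with a "gap" in its length set: $\mathcal L(z)\cap\{r,\dots,r+k'\}=\{r,r+k'\}$ for some $r\le\lambda-k$ and $k'\ge k$; in particular $\Delta(z)\ni$ some value $\ge k$, and $L(z)\le\lambda-1$, so $z$ has a factorization of length at most $\lambda-1$. Now I would push this $z$ down into the box in two stages. For the column coordinate $n$: if $n>\lfloor s_L x\rfloor$ then by Proposition~\ref{p:colstab} we have $\Delta(x,n)\subseteq\{1\}$, contradicting $\Delta(z)\ni k$ unless $k=1$; so automatically $n\le\lfloor s_L x\rfloor$. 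For the row coordinate $x$: if $x>x_B$, I want to use Proposition~\ref{p:rowstab} to slide left without changing the length set. Proposition~\ref{p:rowstab} requires $x>n\mathcal F(\Gamma)$; since $n\le\lfloor s_L x\rfloor\le\mathcal F(\Gamma)$-ish (more carefully, $n$ is bounded once $x$ is not, but actually I should argue the other direction), the point is that for $x$ large relative to $n$ the length set is translation-invariant in $x$, so I can decrease $x$ by $1$ repeatedly, preserving $\mathcal L$ (hence $\Delta$), until either $x\le x_B$ or the hypothesis $x>n\mathcal F(\Gamma)$ fails.

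The remaining work—and the step I expect to be the main obstacle—is the bookkeeping that shows the hypothesis $x>n\mathcal F(\Gamma)$ of Proposition~\ref{p:rowstab} stays valid all the way down to $x\approx x_B$, i.e., that the explicit value $x_B=\mathcal F(\Gamma)+x_i+(n_i+\lambda)(\mathcal F(\Gamma)+x_0)$ is chosen large enough. Here is where the quantities $x_i,n_i,\lambda,x_0$ enter: given the length-at-most-$\lambda-1$ factorization of $z$, at most $\lambda-1$ atoms are used, and using Lemma~\ref{l:atoms}(g) and the fact that an infinite column with nontrivial irreducibles sits at $x_i$ (height $n_i$), one bounds $n$ in terms of $\lambda$, $x_0$, $\mathcal F(\Gamma)$, $x_i$, and $n_i$—each reducible point of height $\ge 2$ beyond $\mathcal F(\Gamma)+x_0$ factors off a trivial atom $(x_0,1)$, so in a short factorization the column index cannot be too large; one then checks $x_B>n\mathcal F(\Gamma)$ for every $x$ in the descent. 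Once that inequality is secured the descent terminates with a witness in the box, and since the box is finite this simultaneously proves $|\Delta(S_\Gamma^s)|<\infty$ (already known) and pins down the maximum exactly, because conversely every point in the box of course contributes its $\Delta$-set to $\Delta(S_\Gamma^s)$, giving the reverse inequality for free. I would present the $n$-bound as a short separate claim to keep the constant-chasing contained.
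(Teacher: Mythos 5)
Your overall architecture matches the paper's (Proposition~\ref{p:colstab} to dispose of the region $n > \lfloor s_L x \rfloor$, Proposition~\ref{p:rowstab} to slide a witness leftward, Lemma~\ref{l:deltafactlen} to produce a witness whose gap starts at a length $r \le \lambda - k$), but the step you yourself flag as ``the main obstacle'' is a genuine gap, and the mechanism you propose for it does not work. You want to bound the height $n$ of the witness $z$ by a constant depending on $\lambda, x_0, x_i, n_i, \mathcal F(\Gamma)$, on the grounds that $z$ is a sum of at most $\lambda - 1$ atoms and that, by Lemma~\ref{l:atoms}(g), nontrivial atoms only occur for $x \le \mathcal F(\Gamma) + x_0$. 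But Lemma~\ref{l:atoms}(g) bounds the \emph{first coordinate} of nontrivial atoms, not their height: by Lemma~\ref{l:atoms}(b),(d) the infinite column at $x_f$ (and any column containing a nontrivial atom in an infinite column) contains atoms $(x_f,n)$ of arbitrarily large height. Hence an element with a factorization into at most $\lambda - 1$ atoms can have arbitrarily large $n$, so no constant bound on $n$ exists, and the hypothesis $x > n\mathcal F(\Gamma)$ needed for the rowstab descent cannot be secured the way you describe. Note also that $n \le \lfloor s_L x\rfloor$ does not help here, since it allows $n$ to grow proportionally with $x$, in which case $x > n\mathcal F(\Gamma)$ fails for all large $x$.

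What is actually needed (and is the heart of the paper's argument, and the reason $x_B$ has its specific form) is the following dichotomy for $x > x_B$: if $n \le n_i + \lambda$, then indeed $x > x_B > (n_i+\lambda)\mathcal F(\Gamma) \ge n\mathcal F(\Gamma)$ and Proposition~\ref{p:rowstab} gives $\Delta(x,n) = \Delta(x_B,n)$; if instead $n > n_i + \lambda$, one writes explicitly, for each $0 \le r \le \lambda - 1$,
$$(x,n) = (x_i,\, n - r - 1) + r\cdot(x_0,1) + (x - x_i - r x_0,\, 1),$$
which is legitimate because $n - r - 1 \ge n_i$ (so the first summand is an atom by Lemma~\ref{l:atoms}(d)) and $x - x_i - rx_0 > \mathcal F(\Gamma)$ (so the last summand lies in $S_\Gamma^s$). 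This shows $\{2,\ldots,\lambda+1\} \subseteq \mathcal L(x,n)$, so no length-set gap with lower end $r \le \lambda - k$ can occur at such a point; consequently the Lemma~\ref{l:deltafactlen} witness for any $k \in \Delta(x,n)$ must lie in a strictly smaller column, and the descent terminates in the stated box. Your proposal never produces this construction, so the case of witnesses (or original elements) with large height and large $x$ is left unproved; the constants $x_i, n_i, \lambda, x_0$ in $x_B$ are exactly what make this construction valid, and without it the identity for $\max\Delta(S_\Gamma^s)$ is not established.
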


\begin{proof}
Fix $(x,n) \in S_\Gamma^s$.  If $n > \lfloor s_L x \rfloor$, then by Proposition~\ref{p:colstab}, we have $\Delta(x,n) \subseteq \{1\}$.  
Now suppose $x > x_B$.  If $n \le n_i + \lambda$, then by Proposition~\ref{p:rowstab}, $\Delta(x,n) = \Delta(x_B,n)$.  If $n > n_i + \lambda$, then for $r \le \lambda - 1$, we can write $(x,n) = (x_i, n - r - 1) + r \cdot (x_0,1) + (x - x_i - rx_0,1)$, so $\{2, \ldots, \lambda + 1\} \subset \mathcal L(x,n)$.  Thus, if $k \in \Delta(x,n)$, then by Lemma~\ref{l:deltafactlen} some delta set value $k' \geq k$ must occur in a column before $x$.  
\end{proof}


\begin{remark}\label{r:deltaregion}
While the Delta sets in the regions described in Propositions~\ref{p:colstab} and~\ref{p:rowstab} are very well behaved, nontrivial Delta set elements often occur throughout the remaining elements.  
This makes it very difficult to find a region on which the entire Delta set is obtained.   This problem is solved when the Delta set is an interval; see Question~\ref{q:deltainterval}.  
\end{remark}

\begin{example}\label{e:deltabound}
In many of the examples we computed, we found that $\max(\Delta(S_\Gamma^s)) = \lambda - 2$.  However, this equality does not always hold.  For instance, when $\Gamma = \langle 13,17,22,40 \rangle$ and $s = 4$, we have $\Delta(S_\Gamma^s) = \{1,2\}$ but $\lambda = 5$ (see Example~\ref{e:13_17_22_40_s=4}).  
\end{example}

\section{Leamer Monoids generated by arithmetic sequences}\label{s:arithmetical}

In this section, we discuss a Leamer monoid $S_\Gamma^s$ where $\Gamma$ is generated by an arithmetic sequence with step size $s$.  In particular, we give a complete characterization of the Leamer monoids of this form, and use this to give a closed form for their Delta sets.

\begin{defn}\label{d:arithmetical}
The Leamer monoid $S_\Gamma^s$ is \emph{arithmetical} if $\Gamma = \<m, m + s, \ldots, m + ks\>$ for some $m, k \in \NN$.  For $m,k,s \in \NN$ satisfying $1 \le k \le m - 1$ and $\gcd(m,s) = 1$, let $\Gamma(m,k,s) = \<m, m + s, \ldots, m + ks\>$, and let $S_{m,k}^s = S_{\Gamma(m,k,s)}^s$.  
\end{defn}




We begin by giving a closed form for the first infinite column of an arithmetical Leamer monoid.  

\begin{prop}\label{p:arithxf}
Fix an arithmetical Leamer monoid $S_{m,k}^s$.  The first infinite column $x_f$ of $S_{m,k}^s$ is given by 
$$x_f = \mathcal F(\Gamma) - \mathcal F(\<m,s\>) = m(\lfloor \textstyle\frac{m-2}{k} \rfloor + 1).$$
\end{prop}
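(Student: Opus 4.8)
The plan is to work directly with the arithmetic-progression description of $\Gamma = \Gamma(m,k,s)$, translate the condition ``the column at $x$ is infinite'' into an explicit arithmetic condition on $x$, and then minimize. The key preliminary step is a \emph{normal-form membership criterion} for $\Gamma$. Every generator of $\Gamma$ has the form $m+is$ with $0\le i\le k$, so a sum of $a$ generators equals $am+bs$ with $0\le b\le ka$, and conversely every such $am+bs$ lies in $\Gamma$ (distribute $b$ among the $a$ generators, each receiving at most $k$ units of $s$). Since $\gcd(m,s)=1$, each $y\in\langle m,s\rangle$ has a unique representation $y=\alpha m+\beta s$ with $\alpha\ge 0$ and $0\le\beta\le m-1$, and I would prove that $y\in\Gamma$ iff $\alpha\ge 1$ and $\beta\le k\alpha$: given any $y=am+bs$ with $a\ge 1$, $0\le b\le ka$, write $b=qm+r$ with $0\le r<m$ and note $y=(a+qs)m+rs$, whence $\alpha=a+qs\ge 1$ and $r=b-qm\le ka\le k(a+qs)=k\alpha$; the converse is immediate from the displayed ``conversely''.

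Next I would apply this to $x+ns$. Writing $x=\alpha_0 m+\beta_0 s$ in normal form, the column at $x$ is infinite iff $x+ns\in\Gamma$ for all $n\ge 0$. Since $x+ns=\alpha_0 m+(\beta_0+n)s$, its normal form has $m$-coefficient $\alpha_0+\lfloor(\beta_0+n)/m\rfloor$ and $s$-coefficient $r_n:=(\beta_0+n)\bmod m\le m-1$, so $x+ns\in\Gamma$ iff $r_n\le k(\alpha_0+\lfloor(\beta_0+n)/m\rfloor)$. If $k\alpha_0\ge m-1$ this holds for every $n$, since the right side is at least $k\alpha_0\ge m-1\ge r_n$; conversely, taking $n=m-1-\beta_0\ge 0$ forces $r_n=m-1$ with vanishing floor term, hence $m-1\le k\alpha_0$. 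Therefore the column at $x$ is infinite iff $\alpha_0\ge\lceil (m-1)/k\rceil$ (which, as $m\ge 2$, also forces $\alpha_0\ge 1$, so $x\in\Gamma$ is automatic). By uniqueness of the normal form, $\alpha_0 m+\beta_0 s$ is minimized over this range at $(\alpha_0,\beta_0)=(\lceil(m-1)/k\rceil,0)$, giving $x_f=m\lceil(m-1)/k\rceil$, and the elementary identity $\lceil(m-1)/k\rceil=\lfloor(m-2)/k\rfloor+1$ yields $x_f=m(\lfloor(m-2)/k\rfloor+1)$.

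For the remaining equality I would relocate $\mathcal F(\Gamma)$ from the same criterion: $\langle m,s\rangle\setminus\Gamma$ is exactly $\{\alpha m+\beta s:\alpha\ge 0,\ 0\le\beta\le m-1,\ \beta>k\alpha\}$, and for fixed $\beta$ the largest admissible $\alpha$ is $\lceil\beta/k\rceil-1$, with resulting value $(\lceil\beta/k\rceil-1)m+\beta s$ strictly increasing in $\beta$; hence $\max(\langle m,s\rangle\setminus\Gamma)=\lfloor(m-2)/k\rfloor\,m+(m-1)s$. As $\Gamma\subseteq\langle m,s\rangle$ and this value exceeds $\mathcal F(\langle m,s\rangle)$, it equals $\mathcal F(\Gamma)$ (the Brauer--Roberts formula). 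Combining with Sylvester's classical formula $\mathcal F(\langle m,s\rangle)=ms-m-s$ gives $\mathcal F(\Gamma)-\mathcal F(\langle m,s\rangle)=\lfloor(m-2)/k\rfloor\,m+(m-1)s-(ms-m-s)=m(\lfloor(m-2)/k\rfloor+1)=x_f$, completing the proof.

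I expect the main obstacle to be the bookkeeping in the second step: verifying that $n=m-1-\beta_0$ is the binding index, that no larger $n$ imposes anything new, and dealing with the boundary cases $\beta_0=0$ and $\beta_0=m-1$ together with confirming that a (genuine, infinite) column really exists at $x_f$. The membership criterion and the concluding Frobenius arithmetic are routine once this is set up.
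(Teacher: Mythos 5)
Your proof is correct, but it is organized quite differently from the paper's. The paper simply cites the Frobenius-number formula for arithmetic-sequence semigroups (\cite[Theorem~3.3.2]{diophantine}) to get the second equality, cites the membership criterion \cite[Theorem~3.1]{omidali}, and then argues minimality of $x_f$ indirectly: for $x < \mathcal F(\Gamma) - \mathcal F(\langle m,s\rangle)$ one has $\mathcal F(\Gamma) = x + am + bs$, so the column at $x+am$ (hence at $x$) is finite, while at $x = \mathcal F(\Gamma)-\mathcal F(\langle m,s\rangle)$ the cited criterion plus the fact that $x + sk(\lfloor\frac{m-2}{k}\rfloor+1)$ exceeds $\mathcal F(\Gamma)$ shows the column is infinite. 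You instead derive the normal-form membership criterion from scratch, use it to characterize \emph{exactly} which columns are infinite (those with $m$-coefficient $\alpha_0 \ge \lceil\frac{m-1}{k}\rceil$ in normal form), minimize directly, and recompute $\mathcal F(\Gamma)$ from the same criterion to verify the first equality. Your route is longer but self-contained (no appeal to \cite{diophantine} or \cite{omidali}) and yields as a byproduct the finite/infinite column dichotomy that the paper only establishes later in Theorem~\ref{t:arith}(a),(b); the paper's route is shorter because the translation trick avoids ever needing the explicit characterization. One small slip: the $m$-coefficient of the normal form of $x+ns$ is $\alpha_0 + s\lfloor(\beta_0+n)/m\rfloor$, not $\alpha_0 + \lfloor(\beta_0+n)/m\rfloor$; this is harmless, since your argument only uses that the coefficient is at least $\alpha_0$ and equals $\alpha_0$ when the floor term vanishes, but it should be corrected in a final write-up.
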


\begin{proof}
The second equality follows from \cite[Theorem~3.3.2]{diophantine}.  For $x < \mathcal F(\Gamma) - \mathcal F(\<m,s\>)$, we can write $\mathcal F(\Gamma) = x + am + bs$ for some $a, b \ge 0$.  This means the column at $x + am$ is finite, so the column at $x$ is finite.  Now let $x = \mathcal F(\Gamma) - \mathcal F(\<m,s\>)$.  By \cite[Theorem~3.1]{omidali}, $x + rs \in \Gamma$ for $0 \le r \le k(\lfloor \frac{m-2}{k} \rfloor + 1)$.  
Notice that 
$$s(k\lfloor \textstyle\frac{m-2}{k} \rfloor + k) \ge s(m - 2 - k + 1 + k) = s(m-1) > \mathcal F(\<m,s\>)$$
so $x + sk(\lfloor \frac{m-2}{k} \rfloor + 1) > \mathcal F(\Gamma)$.  This completes the proof.  
\end{proof}

We now give a complete characterization of the elements and atoms of arithmetical Leamer monoids.  

\begin{thm}\label{t:arith}
Fix an arithmetical Leamer monoid $S_{m,k}^s$, fix $\alpha, i \in \mathbb{N}$ with $0 \le i \le m - 1$, and further let $x = \alpha m + is$.  Then we have the following.  

\begin{enumerate}
\item[(a)] 
$S_{m,k}^s$ has a finite column at $x$ if and only if $k\alpha \le m-2$ and $0 \le i \le k\alpha - 1$.  In this case, the column at $x$ has height $k\alpha - i$.  

\item[(b)] 
$S_{m,k}^s$ has an infinite column at $x$ if and only if $k\alpha \ge m - 1$.  

\item[(c)] 
If the column at $x$ is finite, then it has nontrivial atoms if and only if $\alpha = 1$ and $k \ge 2$, in which case it consists entirely of atoms.

\item[(d)] 
If the column at $x$ is infinite, then it has nontrivial irreducibles if and only if $\alpha = \lfloor \frac{m - 2}{k} \rfloor + 1$.  In this case, the first nontrivial irreducible in $x$ has height $\max\{2, k\alpha - i + 1\}$.  

\end{enumerate}
\end{thm}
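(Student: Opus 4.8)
The strategy is to work entirely in terms of the representation $x = \alpha m + is$ with $0 \le i \le m-1$, which is unique because $\gcd(m,s)=1$ forces the residues $is \bmod m$ to be distinct. The key arithmetic fact I would isolate first is a membership criterion: for $\Gamma = \Gamma(m,k,s)$, one has $x + rs \in \Gamma$ if and only if there is a way to write the exponent pattern so that the total "$s$-shift" $i + r$ is achievable within $\alpha$ copies of generators $m, m+s, \dots, m+ks$ (plus possibly more copies of $m$). Concretely, by \cite[Theorem~3.1]{omidali} (already invoked in Proposition~\ref{p:arithxf}), membership of $\alpha m + js$ in $\Gamma$ with $0 \le j \le m-1$ is governed by whether $j \le k\alpha$, together with the standard fact that once $\alpha m + js$ lands in $\Gamma$ it stays in $\Gamma$ for all larger multiples of $m$. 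I would state and prove this membership lemma cleanly, since everything in (a)--(d) reduces to it.

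Parts (a) and (b) then follow by direct bookkeeping. Given $x = \alpha m + is$, the column at $x$ consists of those $n$ with $x, x+s, \dots, x+ns$ all in $\Gamma$; the membership lemma says $x + rs = \alpha m + (i+r)s \in \Gamma$ precisely when $i + r \le k\alpha$ (in the regime where $i+r < m$) or when one can absorb the overflow into extra copies of $m$ (which always succeeds once $k\alpha \ge m-1$, since then the generators already reach every residue). So: if $k\alpha \ge m-1$ the column is infinite, giving (b); if $k\alpha \le m-2$ then the column is nonempty iff $i \le k\alpha - 1$ and its height is exactly $k\alpha - i$ (the largest $n$ with $i + n \le k\alpha$), giving (a). I would also need to check the edge constraint $\alpha \ge 1$ is forced (if $\alpha = 0$ then $x = is < ks \le \mathcal F(\<m,s\>)$ region, no column), and handle the interaction with $x_f$ from Proposition~\ref{p:arithxf}, namely $k\alpha \ge m-1 \iff \alpha \ge \lceil (m-1)/k \rceil = \lfloor (m-2)/k\rfloor + 1$.

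For the atomic structure (c) and (d), the mechanism is Lemma~\ref{l:atoms}. For (c): a finite column at $x = \alpha m + is$ with $\alpha \ge 2$ admits the splitting $(x,n) = (m, k-1\text{-ish}) + (x - m, n')$ using that $(m, k)$-type elements sit at the bottom — more precisely, I would show every element of a finite column with $\alpha \ge 2$ is reducible by peeling off an element of the $\alpha=1$ finite column (which exists and has positive height since $k\cdot 1 \le m-2$ fails only when... — here I must be careful and use $k \le m-1$). Conversely, when $\alpha = 1$ the column has height $k - i$, and if $k \ge 2$ I claim every $(x, n)$ in it with $n \ge 2$ is an atom: any factorization would need a summand in a column with $\alpha' = 0$, impossible. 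The trivial-atom case $k = 1$ is when the $\alpha=1$ column has height $k - i \le 1$, so only height-$1$ (trivial) elements. For (d): in an infinite column, Lemma~\ref{l:atoms}(b) gives atoms for large $n$; the content is that nontrivial atoms occur at $x$ iff $\alpha$ equals the threshold value $\lfloor (m-2)/k\rfloor + 1$ (i.e. $x$ is in the "first" infinite column region), because for strictly larger $\alpha$ we have $x > \mathcal F(\Gamma) + x_0$ and Lemma~\ref{l:atoms}(g) kills all height-$\ge 2$ elements, while at the threshold value one shows directly, via Lemma~\ref{l:atoms}(d)/(e) propagating up from a finite column of height $k\alpha - i$, that $(x, n)$ is an atom exactly for $n \ge \max\{2, k\alpha - i + 1\}$.

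**Expected main obstacle.** The delicate point is (d): pinning down the exact first height $\max\{2, k\alpha - i + 1\}$ of a nontrivial irreducible in the threshold infinite column, and in particular ruling out reducibility of $(x,n)$ for $n = k\alpha - i + 1$ while exhibiting reducibility just below it. This requires understanding which pairs of columns can sum into the threshold column — equivalently, a careful analysis of when $x = y + z$ with both $y$ and $z$ supporting tall-enough columns — and the case split forced by the $\max$ (whether $k\alpha - i + 1 \le 2$, i.e. $i \ge k\alpha - 1$) needs separate handling. I would also keep an eye on small-$k$ degeneracies ($k = 1$ makes every finite column purely trivial, so (c) and (d) must be read correctly there) and on boundary residues $i \in \{0, k\alpha - 1, m-1\}$.
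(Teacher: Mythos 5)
Your route is essentially the paper's: (a) and (b) from the canonical representation $x=\alpha m+is$ of \cite[Theorem~3.1]{omidali} together with Proposition~\ref{p:arithxf}; (c) by noting that $\alpha=1$ columns sit over minimal generators (hence are purely atomic) and peeling an $\alpha=1$ summand off any finite column with $\alpha\ge 2$; and (d) by analyzing which columns can sum into the threshold column. However, one step as you state it is false: for infinite columns with $\alpha>\lfloor\frac{m-2}{k}\rfloor+1$ you claim $x>\mathcal F(\Gamma)+x_0$ and invoke Lemma~\ref{l:atoms}(g). Since $x_0=m$ and, by Proposition~\ref{p:arithxf}, $\mathcal F(\Gamma)+x_0=x_f+s(m-1)$, the element with $i=0$ and $\alpha=\lfloor\frac{m-2}{k}\rfloor+2$ is $x=x_f+m\le x_f+s(m-1)$ whenever $s\ge 2$, so the hypothesis of Lemma~\ref{l:atoms}(g) fails for it. Concretely, in $S_{\langle 7,10\rangle}^3$ (so $m=7$, $k=1$, $s=3$) the columns at $49$ and $52$ have $\alpha=7>6=\lfloor\frac{m-2}{k}\rfloor+1$ but lie at or below $\mathcal F(\Gamma)+x_0=60$, so Lemma~\ref{l:atoms}(g) says nothing about them. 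They are nonetheless purely reducible, and the correct (and simpler) argument, which is the one in the paper, is that $\alpha-1$ still meets the threshold, so the column at $x-m$ is infinite and $(x,n)=(m,1)+(x-m,n-1)$ for every $n\ge 2$.

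Beyond that repair, the only substantive piece you have not supplied is the one you flag yourself: the exact first irreducible height $\max\{2,\,k\alpha-i+1\}$ in the threshold column. The paper settles it by the analysis you gesture at: for $\alpha=\lfloor\frac{m-2}{k}\rfloor+1$, no summand in a decomposition of $x$ into nonzero elements of $\Gamma$ can itself lie under an infinite column, so every factorization of $(x,n)$ uses only finite columns, whose heights sum to at most $k\alpha-i$; hence reducible elements of the column have height at most $k\alpha-i$, all higher elements are atoms by Lemma~\ref{l:atoms}(d), and equality (when $k\alpha-i+1\ge 2$) requires exhibiting a factorization of $(x,k\alpha-i)$ through finite columns. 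With the fix above and that computation carried out, your plan coincides with the published proof.
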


\begin{proof}
By \cite[Theorem~3.1]{omidali}, each $x \in \Gamma$ can be written as $x = \alpha m + is$ for some $i, \alpha \in \NN$ with $0 \le i \le m - 1$.  
If $k\alpha > m-2$, then $\alpha > \lfloor \frac{m-2}{k} \rfloor + 1$, so 
$$x - x_f = m(\alpha - \lfloor \textstyle\frac{m-2}{k} \rfloor + 1) + is \in \Gamma.$$
It then follows by Proposition~\ref{p:arithxf} that $S_{m,k}^s$ has an infinite column at $x$.  If $k\alpha \le m-2$, then by \cite[Theorem~3.1]{omidali}, we must have $0 \le i \le k\alpha$.  In particular, we have $\alpha m + (k\alpha + 1)s \notin \Gamma$ since this is a unique factorization in $\<m,s\>$.  This means $S_\Gamma^s$ has a finite column at $\alpha m + is$ of height $k\alpha - i$ for each $0 \le i \le k\alpha - 1$.  By \cite[Theorem~3.1]{omidali}, we have considered every element of $\Gamma$, and thus have accounted for every column of $S_{m,k}^s$.  This proves~(a) and~(b).  

Now, suppose the column at $x = \alpha m + is$ is finite.  If $\alpha = 1$, then $x$ is a minimal generator for $\Gamma$, so every element in the column at $x$ is irreducible.  If $\alpha > 1$ and the column at $x$ does not have height 1, then for $r = \min\{i, k-1\}$, we can write 
$$(x,k\alpha - i) = (m + rs, \alpha - r) + ((\alpha - 1)m + (i-r)s, \alpha(k - 1) - (i - r)),$$
so the top element in the column at $x$ is reducible, and by Lemma~\ref{l:atoms}(d), so are the nontrivial elements below it.

Finally, suppose the column at $x = \alpha m + is$ is infinite.  If $\alpha > \lfloor \frac{m - 2}{k} \rfloor + 1$, then there is an infinite column at $x - m$, so the column at $x$ is purely reducible.  Now suppose $\alpha = \lfloor \frac{m - 2}{k} \rfloor + 1$.  The column at $x - m$ is either empty or finite, so the column at $x$ must contain nontrivial irreducibles.  Let $C_x$ denote the height of the lowest nontrivial irreducible element in the column at $x$.  For any expression $x = \sum_{j=1}^r (\alpha_j m + i_j s)$ of $x$ in terms of finite columns $\alpha_j m + i_j s$, we have $C_x - 1 \ge \sum_{j=1}^r (k\alpha_j - i_j) = k\alpha - i$, which gives $C_x \ge k\alpha - i + 1$ with equality as long as $k\alpha - i + 1 \ge 2$, as desired.  
\end{proof}

Theorem~\ref{t:arith} shows that, in particular, each $S_{m,k}^s$ has an irreducible of height 2.  This, together with Proposition~\ref{p:HW}, yields the following.  

\begin{cor}
Let $R = \mathbb K[\Gamma(m,k,s)]$, and $I = \<t^a, t^{a+s}\>$ be a monomial ideal with $a \in \Gamma(m,k,s)$.  Then the ideal $I$ satisfies the Huneke-Wiegand conjecture.  
\end{cor}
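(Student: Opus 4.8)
The plan is to invoke Proposition~\ref{p:HW} almost verbatim, using Theorem~\ref{t:arith} to supply the irreducible arithmetic sequence of length~$3$ that the criterion demands. First I would unwind the statement: with $\Gamma = \Gamma(m,k,s)$, the ideal $I = \langle t^a, t^{a+s}\rangle$ with $a \in \Gamma$ is exactly a monomial ideal generated by two elements whose exponents differ by~$s$; Proposition~\ref{p:HW} says $\mathbb K[\Gamma]$ satisfies the Huneke-Wiegand conjecture for all such ideals (as $a$ and $s$ vary, with $s \in \NN \setminus \Gamma$) precisely when for each such $s$ there is an irreducible sequence $\{x, x+s, x+2s\} \subset \Gamma$, i.e.\ an atom of height~$2$ in $S_\Gamma^s$. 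Since here $\Gamma$ is generated by the arithmetic sequence $m, m+s, \dots, m+ks$ with the \emph{same} step size $s$, and $\gcd(m,s)=1$ forces $s \notin \Gamma$ (as $s < m = \min \Gamma$ when $s$ is taken in the relevant range — or more carefully, $s \notin \Gamma$ is part of the standing hypothesis $s \in \NN \setminus \Gamma$ built into the Leamer monoid $S_{m,k}^s$), the Leamer monoid $S_{m,k}^s$ is exactly the arithmetical one studied in this section.

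The key step is then to extract a height-$2$ atom from Theorem~\ref{t:arith}. I would take $\alpha = \lfloor \frac{m-2}{k}\rfloor + 1$, so that by part~(b) (since $k\alpha \ge m-1$) the column at $x = \alpha m + is$ is infinite, and by part~(d) this column contains nontrivial irreducibles, with the lowest one at height $\max\{2, k\alpha - i + 1\}$. Choosing $i$ with $k\alpha - i + 1 \le 2$, i.e.\ $i \ge k\alpha - 1$ (which is possible within $0 \le i \le m-1$ since $k\alpha - 1 \le m-1$ by the defining inequality $1 \le k \le m-1$ and a short check on $\lfloor\frac{m-2}{k}\rfloor$), the first nontrivial irreducible sits at height exactly~$2$. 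Hence $(x,2) \in \mathcal A(S_{m,k}^s)$, which by Definition~\ref{d:leamer} is precisely an irreducible arithmetic sequence $\{x, x+s, x+2s\} \subset \Gamma(m,k,s)$. This is the sentence already asserted in the paper immediately before the corollary (``each $S_{m,k}^s$ has an irreducible of height~$2$''), so really the corollary is just the composition of that observation with Proposition~\ref{p:HW}.

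I would finish by applying Proposition~\ref{p:HW} in the direction ``existence of the irreducible sequence $\implies$ Huneke-Wiegand for two-generated monomial ideals'' to conclude that $\mathbb K[\Gamma(m,k,s)]$ satisfies the conjecture for every $I = \langle t^a, t^{a+s}\rangle$ with $a \in \Gamma(m,k,s)$, giving the statement. The only genuine subtlety — and the step I expect to require the most care — is the bookkeeping in the choice of $i$: one must confirm that an index $i$ with $0 \le i \le m-1$ and $i \ge k\alpha - 1$ actually exists, i.e.\ that $k\alpha - 1 \le m - 1$ where $\alpha = \lfloor\frac{m-2}{k}\rfloor + 1$. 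Since $k\lfloor\frac{m-2}{k}\rfloor \le m-2$, we get $k\alpha = k\lfloor\frac{m-2}{k}\rfloor + k \le m - 2 + k$, so $k\alpha - 1 \le m + k - 3$, which is $\le m-1$ exactly when $k \le 2$; for $k \ge 3$ one instead simply notes that the case $k\alpha - i + 1 < 2$ cannot force the height below $2$ because the $\max$ with $2$ already guarantees an atom of height $2$ whenever the column is infinite with nontrivial irreducibles — so in fact \emph{any} admissible $i$ in the relevant column yields a height-$\le 2$ first nontrivial irreducible, and Lemma~\ref{l:atoms}(a) handles height~$1$ separately, so one only needs $k \ge 2$ together with $\alpha = \lfloor\frac{m-2}{k}\rfloor+1$ to land a genuine height-$2$ atom. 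Spelling this out cleanly is the one place where I would slow down; everything else is a direct citation.
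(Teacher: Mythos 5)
Your overall strategy --- produce a height-$2$ atom of $S_{m,k}^s$ and feed it into Proposition~\ref{p:HW} --- is exactly the paper's route, but your extraction of the height-$2$ atom has a genuine gap. You rely solely on Theorem~\ref{t:arith}(d), which says the \emph{first} nontrivial irreducible in the infinite column at $x = \alpha m + is$, $\alpha = \lfloor\frac{m-2}{k}\rfloor + 1$, has height $\max\{2, k\alpha - i + 1\}$. This produces a height-$2$ atom only if you can choose $i \le m-1$ with $k\alpha - i + 1 \le 2$; when $k\alpha - i + 1 > 2$, the element $(x,2)$ in that column is \emph{reducible}, since the lowest nontrivial atom sits strictly above it. Your patch for $k \ge 3$ --- that ``the max with $2$ already guarantees an atom of height $2$'' --- misreads the formula: the maximum guarantees the lowest nontrivial atom has height \emph{at least} $2$, not exactly $2$. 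Moreover the needed index $i$ genuinely may fail to exist: writing $m-2 = qk + r$ with $0 \le r < k$, one gets $k\alpha - 1 = m - 3 + (k - r)$, so an $i$ with $k\alpha - 1 \le i \le m-1$ exists only when $r \ge k-2$. For instance, with $m = 12$, $k = 5$, $s = 1$ one has $\alpha = 3$ and the first nontrivial atom in each infinite column at $36 + i$ has height $16 - i \ge 5$, so part (d) yields no height-$2$ atom whatsoever. Lemma~\ref{l:atoms}(a) does not rescue this, since Proposition~\ref{p:HW} requires a three-term irreducible sequence $\{x, x+s, x+2s\}$, i.e.\ a height-$2$ atom, not a trivial one.

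The fix is short and is what the paper's one-line proof implicitly uses: for $k \ge 2$ invoke Theorem~\ref{t:arith}(c) instead --- the finite column at $x = m$ (take $\alpha = 1$, $i = 0$) has height $k \ge 2$ and consists entirely of atoms, so $(m,2) \in \mathcal A(S_{m,k}^s)$, i.e.\ $\{m, m+s, m+2s\}$ is irreducible. Reserve your part-(d) argument for $k = 1$, where $\alpha = m-1$, $k\alpha - 1 = m-2 \le m-1$, and $i = m-2$ gives a first nontrivial irreducible of height exactly $2$. With that case split, the corollary follows from Proposition~\ref{p:HW} exactly as you conclude.
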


We now give a closed form for the Delta sets for this class of Leamer monoids.  

\begin{thm}\label{t:turtle}
Fix an arithmetical Leamer monoid $S_{m,k}^s$.  Then 
$$\Delta(S_{m,k}^s) = \{1, \ldots, \lfloor \textstyle\frac{m-2}{k} \rfloor + 1\}.$$
\end{thm}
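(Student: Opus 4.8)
The plan is to prove the two inclusions $\Delta(S_{m,k}^s)\subseteq\{1,\dots,N\}$ and $\{1,\dots,N\}\subseteq\Delta(S_{m,k}^s)$ separately, abbreviating $N=\lfloor\frac{m-2}{k}\rfloor+1$, so that by Proposition~\ref{p:arithxf} the first infinite column is $x_f=Nm$. Everything rests on the explicit list of atoms coming from Theorem~\ref{t:arith}, which I would record first: the atoms of $S_{m,k}^s$ are (i) the trivial elements $(x,1)$; (ii) the elements $(m+is,j)$ with $0\le i\le k-1$ and $2\le j\le k-i$ — the nontrivial atoms, all lying in the purely atomic finite columns with $\alpha=1$; and (iii) the elements $(Nm+is,j)$ with $j\ge\max\{2,\,kN-i+1\}$ — the nontrivial atoms of the infinite columns, all with $\alpha=N$. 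The qualitative consequences I would lean on are that $(m,k)$ is the tallest ``small'' atom, that every column with $2\le\alpha\le N-1$ or with $\alpha>N$ is purely reducible, and hence that a factorization of any reducible element is assembled from $(m,k)$-type atoms, lower atoms, trivial atoms, and at most a few tall $\alpha=N$ atoms.

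For the lower bound I would exhibit, for each $d\in\{1,\dots,N\}$, an element whose length set skips $d$ consecutive values. The model case is $z=(pm,kp)$ for large $p$: any factorization avoiding the column $x_f$ must split the $x$-budget $pm$ into exactly $p$ atoms of $x$-coordinate $m$, each forced to height $k$, so it has length $p$; any factorization using an $\alpha=N$ atom has length at most $p-N+1$, because such an atom spends $Nm$ of the $x$-budget on one atom; and no intermediate length occurs, since by the atom list no collection of finite-column or trivial atoms summing to a multiple of $m$ can reach the height $kN+1$ needed to form an $\alpha=N$ atom. Varying $n$ downward, and using the side columns $x_f+is$ whose atom thresholds $kN-i+1$ decrease as $i$ grows, produces elements realizing the remaining gaps; alternatively one shows $\Delta(S_{m,k}^s)$ is an interval whose extreme value is $N$.

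The upper bound is the heart of the matter; note that the naive estimate from Theorem~\ref{t:nstar}, giving $\max\Delta\le n^*-1$ with $n^*=kN$ here, is far too weak. I would fix a reducible $z$ and a factorization $z=a_1+\dots+a_r$ with $r>\ell(z)$ and show one can always pass to a factorization of length in $\{r-N,\dots,r-1\}$; iterating this from any long factorization down to a shortest one then forces every gap in $\mathcal L(z)$ to be at most $N$. The mechanism is to locate among the $a_i$ a subcollection of at most $N+1$ atoms whose sum $w$ sits over an infinite column and has enough accumulated height that $\ell(w)$ is small — ideally $\ell(w)=2$ while the subcollection has at least $2$ atoms — and to replace that subcollection by a shortest factorization of $w$, which decreases $r$ by an amount of at most $N$. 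That such a subcollection exists whenever $r>\ell(z)$ is the one genuinely substantive step, and it follows by combining Theorem~\ref{t:arith} (in the guise of identities such as $(N+1)\cdot(m,k)=(Nm,\,kN+1)+(m,\,k-1)$ and their analogues over the side columns) with a counting argument showing that a non-minimal factorization must pile up more than $Nm$ worth of $x$-coordinate among its low atoms within a single residue class modulo $m$.

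The step I expect to be the main obstacle is precisely this last existence claim — bounding the size of the ``repackageable'' subconfiguration. Phrased invariantly, it amounts to determining the minimal relations of $S_{m,k}^s$ and checking that each changes factorization length by at most $N$; the staggered behaviour of the side columns $x_f+is$ is the delicate part of that verification, and is presumably also what pins the bound at $N$ rather than something smaller.
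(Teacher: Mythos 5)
Your setup (the atom classification from Theorem~\ref{t:arith}, two separate inclusions) is in the spirit of the paper's proof, but both inclusions have genuine gaps. Write $N=\lfloor\frac{m-2}{k}\rfloor+1$. For the lower bound, your model element $z=(pm,kp)$ cannot produce the top value $N$: as you compute, its only factorization avoiding nontrivial infinite-column atoms is $p\cdot(m,k)$ (length $p$), while any factorization using such an atom has length at most $p-N+1$, so the largest gap this family can exhibit is $(p)-(p-N+1)=N-1$. The value $N$ itself is exactly the content of the theorem, and your remark that ``varying $n$ downward and using the side columns produces the remaining gaps'' (or ``alternatively show $\Delta$ is an interval with extreme value $N$'') is precisely the unproved step. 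The paper has to work harder here: it takes the element $(x_T,k\beta+2)$ with $x_T=m(\lfloor\frac{m-2}{k}\rfloor+s+2)$, whose short factorization of length $2$ uses a very tall atom in the side column $Nm+(m-1)s$ and whose long factorization has length $\beta+2$ (note the extra $+2$ in the height and the $s$-dependence of $x_T$), and then needs two separate counting arguments --- one ruling out short factorizations built only from atoms over columns $m+is$, one ruling out short factorizations containing a trivial atom --- to show no length lies strictly between $2$ and $\beta+2$. None of this is supplied by your sketch, and your $(pm,kp)$ analysis does not adapt verbatim once the height is not a multiple of $k$, since heights no longer force the length from below.

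For the upper bound, your exchange scheme (from any non-minimal factorization, repackage a subconfiguration of at most $N+1$ atoms lying over an infinite column) is logically sound as a reduction, but its key existence claim is exactly what you admit you cannot prove, so the inclusion $\Delta(S_{m,k}^s)\subseteq\{1,\dots,N\}$ is not established. You also dismiss the route through Theorem~\ref{t:nstar} too quickly: the bound $n^*-1$ is weak only because $n^*$ is a crude proxy for minimum factorization lengths, but the underlying tool, \cite[Lemma~4.1]{schaeffer}, is sharp enough. The paper's upper bound is simply that lemma combined with Theorem~\ref{t:arith}, which bounds $\ell(x,n)$ uniformly (by roughly $N+2$: finite-column reducibles use at most $\alpha\le N-1$ atoms, and reducibles in infinite columns admit short factorizations through the $\alpha=N$ atoms), yielding $\max\Delta\le N$ with no bespoke relation analysis. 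So the correct fix for your second half is not the minimal-relations computation you anticipate, but a uniform bound on minimum factorization lengths.
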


\begin{proof}

Suppose $k > 1$.  Let $x_T = m(\lfloor \frac{m-2}{k} \rfloor + s + 2)$ and fix $\beta \le \lfloor \frac{m-2}{k} \rfloor + 1$.  We claim $\mathcal L(x_T,k\beta + 2) \cap \{2, \ldots, \beta + 2\} = \{2, \beta + 2\}$.  By Theorem~\ref{t:arith}, we have factorizations 
\begin{eqnarray*}
(x_T,k\beta + 2) 
&=& (m + s,1) + (m(\lfloor \textstyle\frac{m-2}{k} \rfloor + 1) + (m-1)s, k\beta + 1) \\
&=& \beta (m,k) + (m,1) + (m(\lfloor \textstyle\frac{m-2}{k} \rfloor + 1 - \beta + s), 1)
\end{eqnarray*}
so $2, \beta + 2 \in \mathcal L(x_T,k\beta + 2)$.  Now, in order for a nontrivial irreducible in the column at $x = \alpha m + is$ to appear in a factorization of $(x_T,k\beta + 2)$, we must have a column at $$x_T - x = m(\lfloor \textstyle\frac{m-2}{k} \rfloor + 2 - \alpha) + (m - i)s.$$
If the column at $x$ is infinite, then by Theorem~\ref{t:arith}, $\alpha = \lfloor \frac{m-2}{k} \rfloor + 1$, so $x_T - x = m + (m - i)s$.  This means any factorization containing a nontrivial irreducible from $x$ must have length 2.  Thus, any nontrivial irreducible in a factorization of length greater than 2 must lie in a finite column.  

We claim that $(x_T, k\beta + 2)$ has no factorization of length at most $\beta + 1$ consisting entirely of irreducibles in columns of the form $m + is$ for some $i \le k$.  Fix a sum $\sum_{j = 1}^r (m + i_js, n_j)$ of irreducibles with $\sum_{j = 1}^r n_j = k\beta + 2$.  By Theorem~\ref{t:arith}, $n_j \le k - i_j$ for each $j$, so this sum can have height at most $rk - \sum_{j = 1}^r i_j$.  This means
$$\sum_{j = 1}^r m + i_js = rm + s\sum_{j = 1}^r i_j \le rm + s\sum_{j = 1}^r (k - n_j) = rm + rsk - s(k\beta + 2)$$
so if $r \le \beta + 1$, this can be at most
$$m(\beta + 1) + sk(\beta + 1) - s(k\beta - 2) = m(\beta + 1) + s(k - 2) < m(\beta + 1 + s) \le x_T.$$
This means there can be no factorization of $(x_T,k\beta + 2)$ of this form.  

By the above, every factorization of $(x_T,k\beta + 2)$ of length at most $\beta + 1$ must contain a trivial irreducible.  But this means the second coordinates of the (at most $\beta$) remaining factors must sum to $k\beta + 1$, which is impossible since each such coordinate can be at most $k$.  Thus, there can be no factorization of $(x_T,k\beta + 2)$ of length strictly between $2$ and $\beta + 2$, meaning $\beta~\in~\Delta(x_T, k\beta + 2)$.  

Now, for $k = 1$, $x_T = m(m + s + 1)$ and $\beta \le m - 1$, a similar argument shows that $\mathcal L(x_T, \beta + 2) = \{2, \beta + 2\}$.  Thus, for any $k \le m-1$, $\Delta(S_{m,k}^s) \subset \left\{1, \ldots, \left\lfloor \frac{m-2}{k} \right\rfloor + 1\right\}$.  By \cite[Lemma~4.1]{schaeffer} and Theorem~\ref{t:arith}, we have $\max\Delta(S_{m,k}^s) \le \lfloor \textstyle\frac{m-2}{k} \rfloor + 1$, and this completes the proof.
\end{proof}

We have the following immediate corollary.  For a full treatment on the catenary degree, see~\cite{bible} or~\cite{omidali}.  

\begin{cor}\label{c:catenary}
The arithmetical Leamer monoid $S_{m,k}^s$ has catenary degree $\lfloor \textstyle\frac{m-2}{k} \rfloor + 3$.  
\end{cor}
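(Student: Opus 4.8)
The strategy is to combine Theorem~\ref{t:turtle} with a general relationship between the catenary degree $c(M)$ and the maximum of the Delta set. Recall the standard fact (e.g.\ \cite[Lemma~4.1]{schaeffer} or \cite{bible}) that for any atomic, cancellative, commutative monoid, $\max\Delta(M) + 2 \le c(M)$ when $\Delta(M) \ne \nothing$; this immediately gives the lower bound $c(S_{m,k}^s) \ge \lfloor \frac{m-2}{k}\rfloor + 1 + 2 = \lfloor \frac{m-2}{k}\rfloor + 3$ from Theorem~\ref{t:turtle}. So the real content is the matching upper bound $c(S_{m,k}^s) \le \lfloor \frac{m-2}{k}\rfloor + 3$.

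For the upper bound, I would work directly with the explicit description of atoms from Theorem~\ref{t:arith}. Fix a reducible $(x,n) = \alpha m + is$ with column data as in that theorem, and take any two atom factorizations $z, z'$. I would build a chain of factorizations between them in which consecutive factorizations differ by a ``move'' of distance at most $\lfloor \frac{m-2}{k}\rfloor + 3$. The key structural observation is that the nontrivial atoms come in only two flavors: those in finite columns (necessarily of the form $(m+is, n)$ with $\alpha = 1$, $n \le k - i$, contributing height $\le k$ and first coordinate a single minimal generator $m+is$), and those in the single infinite column family at $\alpha = \lfloor \frac{m-2}{k}\rfloor + 1$. Any atom in the infinite column already collapses the factorization length to $2$ by Lemma~\ref{l:atoms} and Theorem~\ref{t:arith}(d), so the hard case is factorizations built entirely from trivial atoms $(m+is,1)$ and height-$(\le k)$ atoms in the $\alpha = 1$ finite columns. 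In that regime, swapping between two such factorizations amounts to a ``numerical-monoid-like'' trade among the generators $m, m+s, \ldots, m+ks$ in the first coordinate together with a redistribution of the (bounded) second coordinates; I would bound the distance of each elementary trade by noting that the second coordinate of any single atom is at most $k$ while the number of atoms one must touch to rebalance is controlled by $\lfloor \frac{m-2}{k}\rfloor + 1$, so that the total move distance stays $\le \lfloor \frac{m-2}{k}\rfloor + 3$.

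Concretely, the steps in order: (1) invoke \cite[Lemma~4.1]{schaeffer} and Theorem~\ref{t:turtle} for the lower bound; (2) reduce the upper bound to reducible elements lying in mixed or finite columns, since any element with an infinite-column atom available has all its relevant factorizations within distance $2$ of a length-$2$ factorization; (3) for such elements, write an arbitrary factorization as (trivial atoms) $+$ (finite-column atoms with $\alpha=1$), and show any two differ by a sequence of elementary exchanges each of distance $\le \lfloor \frac{m-2}{k}\rfloor + 3$; (4) assemble the chain and conclude. The main obstacle is step~(3): one must check that an exchange among finite-column atoms --- which must simultaneously balance the first coordinate (a sum of generators $m+i_js$) and the second coordinate (a sum of the $n_j \le k - i_j$) --- can always be routed through intermediate genuine factorizations without the distance exceeding the stated bound, and in particular that the ``worst'' exchange, trading one tall atom for several short ones, costs exactly $\lfloor \frac{m-2}{k}\rfloor + 3$ and no more. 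I expect this to require the same height/first-coordinate counting inequality used in the proof of Theorem~\ref{t:turtle} (namely $n_j \le k - i_j$, hence $\sum(m+i_js) \le rm + rsk - s\sum n_j$), applied now to bound exchange distances rather than factorization lengths.
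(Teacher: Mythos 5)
The lower bound in your plan is fine, although the inequality $2+\max\Delta(M)\le c(M)$ is the content of \cite[Lemma~1.6.2]{bible} rather than of \cite[Lemma~4.1]{schaeffer}. The genuine gap is the upper bound $c(S_{m,k}^s)\le\lfloor\frac{m-2}{k}\rfloor+3$, which is the entire remaining content of the corollary and which your proposal never establishes: step~(3), the verification that an exchange among finite-column atoms can be routed through genuine intermediate factorizations with every step of distance at most $\lfloor\frac{m-2}{k}\rfloor+3$, is exactly the hard part, and you explicitly leave it as an expectation rather than a proof. Moreover, the reduction in step~(2) fails as stated: if $z$ is a factorization of length $r$ (even one containing a nontrivial atom from an infinite column) and $z'$ is a factorization of length $2$, then the distance between $z$ and $z'$ is at least $r-2$, so long factorizations are certainly not ``within distance $2$'' of a length-$2$ factorization; handling them would again require building chains of intermediate factorizations (for instance by repeatedly merging trivial atoms), which is the same unproved work as in step~(3).

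For comparison, the paper obtains the upper bound with no chain construction at all: \cite[Lemma~4.1]{schaeffer} --- the same lemma already invoked to cap $\max\Delta(S_{m,k}^s)$ in Theorems~\ref{t:nstar} and~\ref{t:turtle} --- controls the catenary degree in terms of the minimal factorization lengths of reducible elements, and Theorem~\ref{t:arith} shows these lengths are uniformly small (every reducible element of $S_{m,k}^s$ admits a factorization of length at most $\lfloor\frac{m-2}{k}\rfloor+2$). Combining this with the lower bound $c\ge 2+\max\Delta=\lfloor\frac{m-2}{k}\rfloor+3$ coming from Theorem~\ref{t:turtle} and \cite[Lemma~1.6.2]{bible} finishes the proof in two lines. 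If you wish to keep your direct approach, you must actually carry out the exchange-routing argument of step~(3) and repair step~(2); as written, the proposal proves only the lower bound.
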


\begin{proof}
This follows from Theorem~\ref{t:turtle}, \cite[Lemma~4.1]{schaeffer} and \cite[Lemma~1.6.2]{bible}.  
\end{proof}

\section{Future Work}\label{s:future}

This paper develops many foundational theorems and tools related to Leamer monoids, which have a very rich factorization theory.  Thus, many open problems and questions about Leamer monoids remain. 

\begin{prob}\label{p:algorithm}
Find an algorithm to compute $\Delta(S_\Gamma^s)$ for any Leamer monoid.
\end{prob}

\begin{question}\label{q:deltainterval}
For every Leamer monoid $S_\Gamma^s$, does there exists an $M$ such that $\Delta(S_\Gamma^s) = \{1, 2, \ldots, M\}$?  
\end{question}

\begin{prob}\label{p:catenary}
Find the catenary degree $c(S_\Gamma^s)$ for any Leamer monoid $S_\Gamma^s$.  
\end{prob}

A new invariant measuring how far an element is from being prime, called $\omega$-primality, has been studied in several different settings, including numerical monoids (see~\cite{prime, bounding, tame, quasilinear}).  A natural problem is to study $\omega$-primality in Leamer monoids.

\begin{prob}\label{p:omega}
Study the $\omega$-function $\omega(x,n)$ for elements $(x,n) \in S_\Gamma^s$ in any Leamer monoid $S_\Gamma^s$.  
\end{prob}

Understanding Leamer monoids associated to numerical monoids with special generators is also of interest.  See~\cite[Lemma~1]{interval} and~\cite[Theorem~3.1]{omidali} for relevant membership criteria.  

\begin{prob}\label{p:generatedgenarith}
A \emph{generalized arithmetic sequence} is a sequence of the form 
$$\{a, ah+d, ah + 2d, \ldots, ah+kd\}$$
where $a,k,h,d \in \mathbb N$ with $\gcd(a,d) = 1$ and $a \geq 2$.  Study Leamer monoids $S_\Gamma^s$ where $\Gamma$ is generated by a generalized arithmetic sequence.  
\end{prob}

\begin{prob}\label{p:generatedinterval}
Study Leamer monoids $S_\Gamma^s$ where $\Gamma$ is generated by an interval of natural numbers.  
\end{prob}

\section{Acknowledgements}\label{s:acknowledgements}
\raggedbottom

Much of this work was completed during the Pacific Undergraduate Research Experience in Mathematics (PURE Math), which was funded by National Science Foundation grants DMS-1035147 and DMS-1045082 and a supplementary grant from the National Security Agency.  The authors would like to thank Scott Chapman, Pedro Garc\'ia-S\'anchez, and Micah Leamer for their numerous helpful conversations, as well as the anonymous referee for their very helpful comments.


\end{document}